\documentclass{article}
\usepackage[utf8]{inputenc}
\usepackage{graphicx} 
\usepackage[page]{appendix}

\usepackage{amsmath}
\usepackage{amssymb}
\usepackage{amsfonts}
\usepackage{dsfont}

\usepackage[T1]{fontenc}   
\usepackage{lmodern}       
\usepackage{textcomp}      

\usepackage{amsthm}
\usepackage{mathtools}
\usepackage{enumerate}
\usepackage[hidelinks]{hyperref}
\usepackage{titlesec}
\usepackage{tikz}

\usepackage[shortlabels]{enumitem}
\usepackage{yfonts}
\usepackage{setspace}
\usepackage[utf8]{inputenc}
\usepackage[english]{babel}
\usepackage[margin = 1in]{geometry}
\usepackage{tikz}
\usepackage{graphicx}
\usepackage{authblk}
\usepackage{tikz-cd}
\usetikzlibrary{calc}
\usepackage{fancyhdr}
\usepackage{float}

\titleformat{\section}[block]{\centering\Large}{\thesection}{1em}{}

\theoremstyle{definition}

\theoremstyle{remark}
\newtheorem*{rem}{Remark}
\theoremstyle{plain}

\newtheorem{thm}{Theorem}[section]
\newtheorem{prop}[thm]{Proposition}
\newtheorem{lem}[thm]{Lemma}
\newtheorem{cor}[thm]{Corollary}

\newtheorem{defi}[thm]{Definition}

\title{The Four Periodic Inscription Problem}
\author{Ali Naseri Sadr}
\date{}

\begin{document}

\maketitle

\begin{abstract}
   We prove that four periodic curves in $\mathbb{C}$ inscribe a family of isosceles trapezoids for every angle $\theta\in(0,\pi)$, whose similarity types are determined by the signed areas between the quotients of the curves in the infinite cylinder. In the smooth case, the problem is identified with a Lagrangian intersection problem in $\mathbb{R}\times S^1\times\mathbb{C}$, while the continuous case follows by a convergence argument.  
\end{abstract}

\section{Introduction}
Consider four smooth embeddings of the real line $\gamma_1,\dots,\gamma_4\colon\mathbb{R}\to \mathbb{C}$ that satisfy the periodicity condition
\begin{equation*}
    \gamma_i(t+1) = \gamma_i(t)+\sqrt{-1}
\end{equation*}
for every $t$ and $1\leq i\leq 4$. Take the quotient of $\mathbb{C}$ under translation by $\sqrt{-1}$; the resulting space is the infinite cylinder $\mathbb{R}\times S^1$, and each $\gamma_i$ is mapped to a closed embedded curve $\tilde{\gamma_i}$ that generates $H_1(\mathbb{R}\times S^1)$. 
\begin{defi}
    We say the quadruple $(\gamma_1,\gamma_2,\gamma_3,\gamma_4)$ admits a balanced inscription of a quadrilateral $Q$ if there are four points $p_1,p_2,p_3,p_4$ in $\mathbb{C}$ with $p_i\in\gamma_i(\mathbb{R})$ for each $i$, and the quadrilateral formed by these points is similar to $Q$.
\end{defi}
Define
\begin{align*}
    &A(\gamma_1,\gamma_2) \coloneqq \int_{\tilde{\gamma}_1}xdy-\int_{\tilde{\gamma}_2}xdy,\\
    & A(\gamma_3,\gamma_4)\coloneqq \int_{\tilde{\gamma}_3}xdy-\int_{\tilde{\gamma}_4}xdy.
\end{align*}
These quantities are equal to the signed area between each pair of these curves in the infinite cylinder $\mathbb{R}\times S^1$. Tao conjectured in \cite[Conjecture 5.6]{1} that if 
\begin{equation}
\label{the one-half case of area inequality}
    A(\gamma_1,\gamma_2) + A(\gamma_3,\gamma_4) = 0,
\end{equation}
and
\begin{equation*}
    \gamma_1(\mathbb{R}) \cap \gamma_2(\mathbb{R})\cap \gamma_3(\mathbb{R})\cap \gamma_4(\mathbb{R})=\emptyset,
\end{equation*}
then the quadruple $(\gamma_1,\gamma_2,\gamma_3,\gamma_4)$ admits a balanced inscription of a square.
This is a variation of the Toeplitz square peg problem for a quadruple of periodic curves in $\mathbb{C}$. We prove this conjecture and generalize it to the case of isosceles trapezoids. We will show how the area relation in equation \eqref{the one-half case of area inequality} is connected to a well-known concept in symplectic geometry and Lagrangian Floer homology. 

An isosceles trapezoid is a trapezoid where the two non-parallel sides have equal length. The similarity type of an isosceles trapezoid is determined by a number $t\in (0,1)$ and an angle $\theta\in (0,\pi)$, where the ratio of the two parallel sides is $\frac{t}{1-t}$, and the angle between its two diagonals is $\theta$. We call such a quadrilateral a type $(t,\theta)$ trapezoid.
\begin{thm}
\label{Main Thm}
   Suppose $(\gamma_1,\gamma_2,\gamma_3,\gamma_4)$ are four smooth periodic embeddings of the real line in the complex plane, and the intersection of their images is empty. If these curves satisfy the generalized area relation
   \begin{equation}
   \label{t-area equality relation}
       t\cdot A(\gamma_1, \gamma_2) + (1-t)\cdot A(\gamma_3,\gamma_4) = 0
   \end{equation}
   for a number $t\in (0,1)$, then $(\gamma_1,\gamma_2,\gamma_3,\gamma_4)$ admits a balanced inscription of a type $(t,\theta)$ trapezoid for every $\theta\in (0,\pi)$.
\end{thm}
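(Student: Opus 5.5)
We follow the Greene–Lobb strategy for rectangular pegs, adapted to the cylinder. First we parametrize type $(t,\theta)$ trapezoids so that the inscription problem becomes a linear condition. An isosceles trapezoid with vertices $p_1,p_3,p_2,p_4$ in cyclic order (so the parallel sides are $p_1p_3$ and $p_4p_2$ and the diagonals are $p_1p_2$ and $p_3p_4$) has diagonals of equal length. These diagonals meet at a point $c$ which divides each of them in the ratio $t:(1-t)$. Concretely, the quadrilateral is of type $(t,\theta)$ exactly when
\[
p_1 = c+t w,\qquad p_2 = c-(1-t)w,\qquad p_3 = c+(1-t)e^{i\theta}w,\qquad p_4=c-t e^{i\theta}w
\]
for some $c\in\mathbb{C}$ and $w\neq 0$, after possibly relabelling which pair gets weight $t$. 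We will pin down the labelling so that it matches the area relation \eqref{t-area equality relation}. We then define the real-linear map $\Phi\colon\mathbb{C}^4\to\mathbb{C}^2$ that solves for $(c,w)$ from $(p_1,p_2)$ and from $(p_3,p_4)$, and records the discrepancy. Inscriptions correspond to points of $\gamma_1\times\gamma_2\times\gamma_3\times\gamma_4$ satisfying two complex linear equations, with $w\neq 0$.

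Next we pass to a symplectic formulation. We find a linear change of coordinates on $\mathbb{C}^2=\{(p_1,p_2)\}$ and on $\{(p_3,p_4)\}$ sending each pair to $(c,w)$. This map is conformally symplectic, with factors proportional to $t(1-t)$ and related constants, for the standard forms. The two conditions "both pairs give the same $c$" and "the second $w$ is $e^{i\theta}$ times the first" then become the statement that a point of $L_{12}=\gamma_1\times\gamma_2$ and a point of $L_{34}=\gamma_3\times\gamma_4$ have equal image under the maps
\[
(p_1,p_2)\mapsto (c,w),\qquad (p_3,p_4)\mapsto (c,e^{-i\theta}w),
\]
where the second map uses a rotation and is therefore symplectic. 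We then quotient by the diagonal translation $c\mapsto c+i$, which is compatible because all curves share the same period. Writing $w$ in a trivialization, both Lagrangian tori descend to the space $\mathbb{R}\times S^1\times\mathbb{C}$, where the first factor is the $c$-cylinder and $w$ ranges over $\mathbb{C}$. This gives two Lagrangian cylinders-times-circles, or more precisely two Lagrangian tori $T_{12},T_{34}$, and we seek an intersection point with $w\neq 0$. The points with $w=0$ form the "diagonal" locus: they correspond to $c\in\gamma_1\cap\gamma_2$, which is compact, and to common intersections of all four curves. The latter are excluded by hypothesis, so $T_{12}\cap T_{34}\cap\{w=0\}$ is controlled and lies in a clean region.

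The key step is to show that $T_{12}$ and $T_{34}$ must intersect away from $w=0$. Under the scaling factors $t$ and $1-t$, the symplectic area class (the flux of $x\,dy$ along the $S^1$ generator) of $T_{12}$ is $t\,A(\gamma_1,\gamma_2)$ and that of $T_{34}$ is $-(1-t)A(\gamma_3,\gamma_4)$, up to a common normalization. Relation \eqref{t-area equality relation} therefore says precisely that the two tori have equal flux: they are Hamiltonian isotopic in a neighbourhood, in the sense of a Chekanov-type relation. We then isotope both to standard split models $\{c\text{ on a circle}\}\times\{|w|=r\}$, which have the same $r$ because the flux agrees. Floer homology of a Clifford-type torus with itself is nonzero, which forces intersections.

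I expect the main obstacle to be excluding intersections at $w=0$, and non-compactness. To handle $w=0$, I would try a "blow-up" or winding argument in the style of Greene–Lobb. One approach is to note that if all intersections lay on $w=0$, then we could displace near $w=0$ using the empty quadruple intersection, yielding tori that are displaceable or have mismatched flux, a contradiction. Alternatively, we perturb by a small rotation of the $\theta$ parameter and use Polterovich/Chekanov non-displaceability of monotone-type tori in $\mathbb{R}\times S^1\times\mathbb{C}$. For non-compactness, we use that the curves are periodic, so everything stays in a compact region of the $x$-direction.
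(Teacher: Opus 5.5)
Your reformulation matches the paper's. Your $(c,w)$ are the coordinates $\psi(z,w)=(tz+(1-t)w,\,z-w)$, in which $\varphi_\theta$ becomes rotation of the second factor, and the area hypothesis is what makes the action form exact (Proposition~\ref{The area Lagrangian proposition}). Two details are off, though. First, the products must be taken over the \emph{diagonals} of the trapezoid, $\gamma_1\times\gamma_3$ and $\gamma_2\times\gamma_4$. For four vertical lines $x=\alpha_i$ and $t=\tfrac12$ the hypothesis reads $\alpha_1+\alpha_3=\alpha_2+\alpha_4$, which is compatible with inscribed rectangles only if $p_1p_3$ and $p_2p_4$ are the diagonals. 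Relatedly, the flux of a product $\gamma_a\times\gamma_b$ along its generator is a weighted \emph{sum}, such as $t\int_{\tilde\gamma_1}x\,dy+(1-t)\int_{\tilde\gamma_3}x\,dy$, not $tA(\gamma_1,\gamma_2)$. The hypothesis is the vanishing of the \emph{difference} of the two fluxes, and it matches only for the diagonal pairing. Second, the step you single out as the main obstacle is vacuous. A common point with $w=0$ forces $p_1=p_2=p_3=p_4$, i.e.\ a point of the quadruple intersection, which is empty by assumption (the only fixed points of $\varphi_\theta$ are diagonal). Nothing near $w=0$ needs to be blown up or displaced.

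The genuine gap is the Floer-theoretic core. There the real difficulty is non-compactness, which you dismiss. The quotient of $\gamma_1\times\gamma_3$ by the diagonal $\mathbb{Z}$-action is a cylinder $S^1\times\mathbb{R}$, not a torus. The two points slide independently along their curves, so $\operatorname{Im}(w)$ is unbounded; periodicity bounds only real parts. A Hamiltonian isotopy cannot turn this cylinder into a split compact torus $\{c\in\text{circle}\}\times\{|w|=r\}$, so no Clifford-type computation is available. Moreover, $HF$ is not even defined until holomorphic strips are confined to a compact set. The paper supplies exactly the missing ingredients:
\begin{enumerate}
\item A maximum principle for the plurisubharmonic functions $F=\tfrac{t(1-t)}{2}|z-w|^2$ and $g^2$, together with a condition at infinity on the Lagrangians that rules out far-out boundary maxima of $F\circ u$ via Hopf's lemma.
\item A model of four vertical lines, which satisfies these conditions and whose Lagrangians meet cleanly in a circle, so that $HF\cong H^*(S^1;\mathbb{F}_2)\neq 0$ (Proposition~\ref{HF is the circle homology}).
\item A cutoff of the isotopy from the lines to the $\gamma_i$ at large $|z-w|$, needed because that periodic isotopy is not compactly supported in $X$. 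It is combined with a~priori bounds on $|z-w|$ over both intersection sets (Lemma~\ref{bounded band similarity type}). As a result, the cut-off pair is a compactly supported perturbation of the model with exactly the same intersection as $(L_{24},L_{13}^\theta)$ (Proposition~\ref{Change to Suitable Pairs}).
\end{enumerate}
Your appeal to Chekanov- or Polterovich-type non-displaceability does not substitute for any of this, and without it the argument does not go through.
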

\begin{cor}[Conjecture 5.6 in \cite{1}]
\label{rectangle cor}
    If the area relation in \eqref{the one-half case of area inequality} holds, and $\gamma_1(\mathbb{R}) \cap \gamma_2(\mathbb{R})\cap \gamma_3(\mathbb{R})\cap \gamma_4(\mathbb{R})=\emptyset$, then $(\gamma_1,\gamma_2,\gamma_3,\gamma_4)$ inscribes a rectangle of every similarity type. In particular, they inscribe a square.
\end{cor}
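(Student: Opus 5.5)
This corollary should follow from Theorem \ref{Main Thm} by specializing to $t=\tfrac12$. With $t=\tfrac12$, the generalized area relation \eqref{t-area equality relation} becomes
\begin{equation*}
\tfrac12 A(\gamma_1,\gamma_2)+\tfrac12 A(\gamma_3,\gamma_4)=0 ,
\end{equation*}
which is equivalent to \eqref{the one-half case of area inequality}. The disjointness hypothesis on the four images is identical in both statements. Theorem \ref{Main Thm} therefore applies and yields, for every $\theta\in(0,\pi)$, a balanced inscription of a type $(\tfrac12,\theta)$ trapezoid.

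Next I would identify the type $(\tfrac12,\theta)$ trapezoids geometrically. For $t=\tfrac12$ the ratio of the parallel sides is $\frac{t}{1-t}=1$, so the trapezoid has two parallel sides of equal length. An isosceles trapezoid with equal parallel sides is a parallelogram with equal diagonals, that is, a rectangle.

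It remains to check that letting $\theta$ range over $(0,\pi)$ produces every rectangle up to similarity. Consider a rectangle with side lengths $a,b$. Its diagonals cross at the center and meet at an angle $\theta=2\arctan(b/a)$. As $b/a$ ranges over $(0,\infty)$, this angle ranges bijectively over $(0,\pi)$. Every similarity class of rectangles is thus the type $(\tfrac12,\theta)$ for some $\theta$, and the square corresponds to $\theta=\pi/2$.

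The only point needing care is the vertex labelling convention for a type $(t,\theta)$ trapezoid, namely which $p_i$ lie on which parallel side and which pairs form the diagonals. This matters because a rectangle's labelling must be compatible with "similar to $Q$" in the definition of balanced inscription. However, rectangles have enough symmetry that any labelling consistent with the trapezoid convention still gives a rectangle of the stated shape. I do not expect a genuine obstacle here.
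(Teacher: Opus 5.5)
Your proposal is correct and matches the paper's route: the corollary has no separate proof in the paper and is simply the case $t=\tfrac12$ of Theorem \ref{Main Thm}, where a type $(\tfrac12,\theta)$ trapezoid is a rectangle with diagonal angle $\theta$. Your identification is right; it is also visible directly from \eqref{the flow equation}, which for $t=\tfrac12$ produces diagonals $zw$ and $\tilde z\tilde w$ of equal length bisecting each other at $\tfrac{z+w}{2}$, so the vertex-labelling concern does not arise.
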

\begin{rem}
    Theorem \ref{Main Thm} is the strongest result we can get for a quadruple of periodic curves. For instance, if four vertical lines in the plane do not satisfy the area relation in \eqref{the one-half case of area inequality}, then there are no rectangles inscribed in these four curves. 
\end{rem}
Theorem \ref{Main Thm} can be generalized to continuous curves if we can define the quantities $A(\gamma_1,\gamma_2)$ and $A(\gamma_3,\gamma_4)$ such that they behave well under approximation by smooth curves. For this, we need to assume each curve has zero Lebesgue measure. Consider a continuous periodic embedding $\gamma\colon\mathbb{R}\to \mathbb{C}$; its image $\tilde{\gamma}$ divides $\mathbb{R}\times S^1$ into two regions by Jordan curve theorem. Let $U_{\tilde{\gamma}}^{-}$ be the one that contains $-\infty$, and $U_{\tilde{\gamma}}^{+}$ be the one that contains $+\infty$. For a continuous periodic pair $(\gamma_1,\gamma_2)$ in $\mathbb{C}$, we define
\begin{equation}
\label{signed area for continuou curves}
   A(\gamma_1,\gamma_2)\coloneqq \int_{\mathbb{R}\times S^1}(\mathds{1}_{U_{\tilde{\gamma}_1}^{-}}-\mathds{1}_{U_{\tilde{\gamma}_2}^{-}})d\mu
\end{equation}
where $\mu$ denotes the Lebesgue measure on $\mathbb{R}\times S^1$. If each $\gamma_i$ has zero Lebesgue measure, and we have two sequences of smooth periodic curves $\gamma_1^n\to\gamma_1$ and $\gamma_2^n\to\gamma_2$, then $A(\gamma_1^n, \gamma_2^n)$ converges to $A(\gamma_1,\gamma_2)$.  
\begin{thm}
\label{main thm for continuous curves}
    Suppose $(\gamma_1,\gamma_2,\gamma_3,\gamma_4)$ are four continuous periodic embeddings of the real line in the complex plane with zero Lebesgue measure, and the intersection of their images is empty. Moreover, assume that they satisfy equation \eqref{t-area equality relation} for a number $t\in (0,1)$, where we use the definition in \eqref{signed area for continuou curves} for $A(\gamma_1,\gamma_2)$ and $A(\gamma_3,\gamma_4)$. Then $(\gamma_1,\gamma_2,\gamma_3,\gamma_4)$ admits a balanced inscription of a type $(t,\theta)$ trapezoid for every $\theta\in (0,\pi)$.
\end{thm}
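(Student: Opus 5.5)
The plan is to derive Theorem \ref{main thm for continuous curves} from Theorem \ref{Main Thm} by approximation, using the convergence property of the areas $A(\cdot,\cdot)$ stated just before the theorem. Two issues arise. First, the area relation \eqref{t-area equality relation} need not hold exactly for smooth approximants, so we must perturb them to restore it. Second, the inscribed trapezoids must not degenerate to a point in the limit; this is where the empty intersection hypothesis enters.

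\textbf{Step 1 (approximation with exact area relation).} Choose smooth periodic embeddings $\gamma_i^n\to\gamma_i$ uniformly, for instance by convolving a lift with a mollifier in the periodic parameter and then correcting to an embedding. Uniform $C^0$ approximation of a Jordan curve in the annulus by smooth embedded curves that still generate $H_1(\mathbb{R}\times S^1)$ is standard, for example via Schoenflies-type arguments. By the stated convergence, $\epsilon_n\coloneqq tA(\gamma_1^n,\gamma_2^n)+(1-t)A(\gamma_3^n,\gamma_4^n)\to 0$. Replace $\gamma_1^n$ by its horizontal translate $\gamma_1^n-\epsilon_n/t$. Translating a curve horizontally by $s$ changes $\int_{\tilde\gamma}x\,dy$ by exactly $s$, since the curve has $dy$-period $1$. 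Hence the corrected quadruple satisfies \eqref{t-area equality relation} exactly, and it still converges uniformly to $(\gamma_1,\dots,\gamma_4)$.

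\textbf{Step 2 (apply the smooth theorem).} Fix $\theta$. Suppose for the moment that the four smooth images have empty common intersection. Then Theorem \ref{Main Thm} gives points $p_i^n\in\gamma_i^n(\mathbb{R})$ forming a type $(t,\theta)$ trapezoid. The configuration is invariant under the common translation by $\sqrt{-1}$, so we may normalize $\operatorname{Im} p_1^n\in[0,1]$. All curves lie within bounded horizontal distance of each other on a fundamental domain. A trapezoid of fixed similarity type with vertices on these curves, one vertex confined to a compact set, could still have huge diameter. To handle this, I would also control the size, by showing the relevant intersection points lie in a compact region. Compactness is presumably part of the Lagrangian setup in $\mathbb{R}\times S^1\times\mathbb{C}$ in the smooth proof, but it may need to be re-derived here with bounds uniform in $n$. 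This is a concern. If the smooth proof does not yield uniform bounds, I would instead argue directly that very large trapezoids cannot have vertices on four curves lying in a bounded-width strip $\{|\operatorname{Re} z|\le R\}$. This works because the side lengths are comparable, and a large trapezoid of fixed type with all vertices in a vertical strip of width $2R$ must be nearly a segment. That contradicts $\theta\in(0,\pi)$ together with the fixed ratio $t/(1-t)$, once the scale is large.

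\textbf{Step 3 (limit and nondegeneracy).} Pass to a subsequence so that $p_i^n\to p_i$. Uniform convergence gives $p_i\in\gamma_i(\mathbb{R})$. The limit quadrilateral is similar to a type $(t,\theta)$ trapezoid or is a single point, since the shape is fixed and only the scale can degenerate, to zero. Scale zero would force $p_1=\dots=p_4$ to lie in all four images, contradicting the hypothesis. The common-intersection hypothesis for the approximants follows for large $n$ by a similar compactness argument. The four compact quotient images have empty common intersection, so they are uniformly separated in the sense that $\max_i d(z,\tilde\gamma_i)\ge\delta>0$ for all $z$. This property survives uniform $\delta/2$-approximation.

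The main obstacle is Step 2's uniform bound on the size of the trapezoids, and I would handle it by the strip argument above.
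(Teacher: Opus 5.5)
Your proposal is correct and follows the paper's proof step for step. The shared steps are: smooth uniform approximation; exact restoration of the area relation by horizontally translating one approximant (the paper shifts $\gamma_4^n$ where you shift $\gamma_1^n$, which makes no difference); a uniform size bound, which is exactly the strip estimate of Lemma~\ref{bounded band similarity type}, whose constant $b(N,t,\theta)$ depends only on the strip width, so your worry about uniformity in $n$ is resolved; and exclusion of the degenerate limit via the empty common intersection. Your extra check that the approximants themselves have empty common intersection, which is needed to invoke Theorem~\ref{Main Thm}, is a detail the paper leaves implicit.
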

We prove Theorem \ref{main thm for continuous curves} by a standard convergence argument, and we prove Theorem \ref{Main Thm} using symplectic geometry. We will show how balanced inscriptions of type $(t,\theta)$ trapezoids in a smooth quadruple $(\gamma_1, \gamma_2,\gamma_3,\gamma_4)$ can be identified with a Lagrangian intersection problem for two non-compact Lagrangian infinite cylinders in a symplectic manifold that depends on $t$. 
We then use Lagrangian Floer homology to show that this intersection is non-empty if the area relation in \eqref{t-area equality relation} holds. Since these Lagrangians are non-compact, we need to control their behavior at infinity in order to compactify the moduli spaces of holomorphic strips with boundary conditions on these Lagrangians; this requires a notion of \textit{suitability} that is based on the ideas used by Oh in \cite{5}, Seidel and Abouzaid in \cite{MR2602848}[Section $7$], and by Ritter in \cite{RittPaper}. See \cite{3,8, RectanglePaper} for earlier applications of symplectic geometry to inscription problems for Jordan curves in the plane and \cite{NASERISADR_2026} for the case of Jordan curves in surfaces of constant curvature.

The paper is organized as follows. We define our symplectic manifold and the Lagrangians associated to a quadruple of smooth periodic curves in Section \ref{sec: symp setting}. Then we briefly review Lagrangian Floer homology and set up the machinery needed to compute the Floer homology in Section \ref{sec: Lag Floer}. In Sections \ref{sec: vert lines and clean} and \ref{sec: periodic curves and suitable lags}, we compute the Floer homology, and show it is non-trivial. We prove our main theorems, and finish with a couple of inscription results for the case of three periodic curves in Section \ref{sec: main section}.

\section*{Acknowledgments}
The author is grateful to his advisors, John Baldwin and Josh Greene, for their invaluable guidance, support, and insightful conversations about this work.

\section{Symplectic Setting}
\label{sec: symp setting}
Fix a positive number $t$ less than $1$. We equip $\mathbb{C}^2$ with the symplectic form
\begin{equation}
    \omega = t\cdot dx_1\wedge dy_1+(1-t)\cdot dx_2\wedge dy_2.
\end{equation}
Consider a Hamiltonian $F$ on $\mathbb{C}^2$ defined by
\begin{equation}
\label{Defintion of Our Major Hamiltonian}
    F(z,w) = \frac{t(1-t)\Vert z-w\Vert^2}{2}.
\end{equation}
The Hamiltonian vector field of $F$ with respect to $\omega$ is determined by
\begin{equation*}
    \iota_{X_F}\omega = -dF.
\end{equation*}
We have
\begin{equation*}
    dF= t(1-t)\Big((x_1-x_2)dx_1+(x_2-x_1)dx_2+(y_1-y_2)dy_1+(y_2-y_1)dy_2\Big).
\end{equation*}
Hence, we get
\begin{equation*}
    X_F = (1-t)\Big((y_2-y_1)\partial x_1+(x_1-x_2)\partial y_1\Big) + t\Big((y_1-y_2)\partial x_2+(x_2-x_1)\partial y_2\Big).
\end{equation*}
The Hamiltonian flow of $F$ is given by
\begin{equation}
\label{the flow equation}
    \varphi_\theta(z,w) = (tz+(1-t)w+e^{i\theta}(1-t)(z-w), tz+(1-t)w + e^{i\theta}t(w-z))
\end{equation}
for $\theta\in [0,2\pi]$.
Notice that if we have a pair of distinct points $z$ and $w$ in $\mathbb{C}$, then the components of $\varphi_{\theta}(z,w)$ are the two opposite vertices of an isosceles trapezoid with the line segment going from $z$ to $w$ being one of its diagonals. The similarity type of this trapezoid is determined by the pair $(t,\theta)$. We call this a trapezoid of type $(t,\theta)$; see the following figure.

\begin{figure}[h]
    \begin{center}
\begin{tikzpicture}[scale=1]
  \coordinate (A) at (0,0);    
  \coordinate (B) at (4,0);    
  \coordinate (C) at (3,3);    
  \coordinate (D) at (1,3);    
  
  \node[above right] at (C) {$w$};
  \node[above left]  at (D) {$\Tilde{w}$};
  \node[below left]  at (A) {$z$};
  \node[below right] at (B) {$\Tilde{z}$};
  
  \coordinate (O) at (2,2);
  
  \draw[thick, blue] (A) -- (B) -- (C) -- (D) -- cycle;
  
  \draw[dashed, red] (A) -- (C);
  \draw[dashed, red] (B) -- (D);
  
  \node[below right] at ($ (A)!0.4!(O) $) {$1-t$};
  \node[below right] at ($ (O)!0.5!(C) $) {$t$};
  
  \draw (O) ++(45:0.5) arc (45:135:0.5);
  
  \node at ($(O)+(90:0.7)$) {$\theta$};
  
\end{tikzpicture}
    \end{center}
    \caption{$\varphi_\theta(z,w)=(\Tilde{z}, \Tilde{w})$}
    \label{fig1}
\end{figure}
Fix an angle $\theta\in(0,\pi)$ in the following. We will need the following for later compactness arguments.

\begin{lem}
\label{bounded band similarity type}
Consider a positive number $N$ and $(z,w)\in \mathbb{C}^2$. Let $(\Tilde{z}, \Tilde{w})=\varphi_\theta(z,w)$, and suppose 
\begin{align*}
    & \vert \operatorname{Re}(z)\vert\leq N, \vert \operatorname{Re}(\Tilde{z})\vert\leq N,\\
    & \vert \operatorname{Re}(w)\vert\leq N, \vert \operatorname{Re}(\Tilde{w})\vert\leq N.
\end{align*}
Then there exists a constant $b(N,t,\theta)$ depending only on $N,t$, and $\theta$ so that 
\begin{equation*}
    \Vert z-w\Vert\leq b(N,t,\theta).
\end{equation*}
\end{lem}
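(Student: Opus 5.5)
Write $c = tz+(1-t)w$ and $v = z-w$. Then $z = c+(1-t)v$, $w = c - tv$, $\tilde z = c+e^{i\theta}(1-t)v$ and $\tilde w = c - e^{i\theta}tv$. The plan is to eliminate $c$ using differences of the four points, whose real parts are bounded by $2N$. What remains is linear constraints on $\operatorname{Re}(v)$ and $\operatorname{Re}(e^{i\theta}v)$, and for $\theta\in(0,\pi)$ these determine $v$.

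The differences to use are
\begin{align*}
z-\tilde z &= (1-t)(1-e^{i\theta})v,\\
\tilde w - w &= t(1-e^{i\theta})v,\\
z - w &= v.
\end{align*}
Taking real parts and using the hypotheses gives
\begin{align*}
|\operatorname{Re}(v)| &\le 2N,\\
(1-t)\,|\operatorname{Re}((1-e^{i\theta})v)| &\le 2N.
\end{align*}
Since $\operatorname{Re}((1-e^{i\theta})v) = \operatorname{Re}(v) - \operatorname{Re}(e^{i\theta}v)$, it follows that $|\operatorname{Re}(e^{i\theta}v)| \le 2N + 2N/(1-t)$. One could equally use $\tilde z - \tilde w = e^{i\theta}v$ directly, which gives $|\operatorname{Re}(e^{i\theta}v)|\le 2N$; that is even simpler.

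Now write $v = a+ib$. Then $\operatorname{Re}(v) = a$ and $\operatorname{Re}(e^{i\theta}v) = a\cos\theta - b\sin\theta$. Because $\sin\theta \neq 0$ for $\theta\in(0,\pi)$, we can solve $b = (a\cos\theta - \operatorname{Re}(e^{i\theta}v))/\sin\theta$. This gives $|b| \le (2N+2N)/\sin\theta = 4N/\sin\theta$. Hence
\[
\Vert z-w\Vert \le |a|+|b| \le 2N + \frac{4N}{\sin\theta} =: b(N,t,\theta).
\]
The $t$-dependence can in fact be dropped, which is harmless.

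The only real point is that the map $v\mapsto(\operatorname{Re} v, \operatorname{Re}(e^{i\theta}v))$ is an isomorphism of $\mathbb{R}^2$ precisely when $\sin\theta\neq0$. This is where the restriction $\theta\in(0,\pi)$ is used, and the constant blows up as $\theta\to 0$ or $\theta\to\pi$. No real obstacle is expected beyond checking the algebra of the flow formula \eqref{the flow equation}.
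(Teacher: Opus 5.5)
Your proof is correct, and it takes a different route from the paper's.

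The paper argues geometrically with the half-diagonals from the center $c=tz+(1-t)w$. The point $c$ lies in the strip, and $w$, $\tilde w$ lie at distance $t\Vert z-w\Vert$ from $c$ in directions differing by $\theta$. The paper bounds their horizontal displacements from $c$ by $2N$ and runs a case analysis on the angles these directions make with the horizontal line through $c$. This gives $b=2N/\bigl(t\min(\cos\frac{\theta}{2},\sin\theta,\sin\frac{\theta}{2})\bigr)$.

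You instead use the two full diagonals $z-w=v$ and $\tilde z-\tilde w=e^{i\theta}v$. You then invert the linear map $v\mapsto(\operatorname{Re}v,\operatorname{Re}(e^{i\theta}v))$, which is invertible exactly when $\sin\theta\neq0$.

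What your route buys:
\begin{itemize}
\item It needs no case analysis or angle bookkeeping. In the paper, that bookkeeping requires some care about how the angles with the horizontal are measured.
\item It makes the role of $\theta\in(0,\pi)$ transparent.
\item It yields the $t$-independent constant $2N+4N/\sin\theta$. This uniformity is genuinely useful, for example in the equilateral limit of the triangle theorem. There $t_n\to0$, so the paper's stated constant, with its factor $1/t$, degenerates; one would have to rerun the paper's argument with $z,\tilde z$ in place of $w,\tilde w$.
\end{itemize}

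What the paper's route buys: it never uses the hypothesis on $\operatorname{Re}(\tilde z)$, so it proves a slightly stronger statement from only three of the four constraints.

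On presentation: your final estimate uses only $z-w$ and $\tilde z-\tilde w$. The preliminary computation with $z-\tilde z$ and $\tilde w-w$ can simply be deleted, and you should lead with the two diagonals.
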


\begin{proof}
    Let $l$ in $[-N,N]\times\mathbb{R}$ be the line segment going through $tz+(1-t)w$ and parallel to the real line in $\mathbb{C}$. This point divides this segment into two parts; suppose $d_1$ is length of the section to the right of $tz+(1-t)w$ and $d_2$ is length of the left one. It is clear that both $d_1$ and $d_2$ are less than $2N$. If one of the points $w$ or $\tilde{w}$ lies on $l$, then we get
    \begin{equation*}
        t\Vert z-w\Vert<\max(d_1, d_2)\leq 2N.
    \end{equation*}
    Therefore, we can assume none of them is on $l$. Now suppose these points are on different sides of $l$, and let $\varphi_1$ denote the angle between $l$ and the line going through $tz+(1-t)w$ and $w$. Define $\varphi_2$ for $\tilde{w}$ in a similar way. We have $\varphi_1+\varphi_2=\theta$ and assume both $\varphi_1$ and $\varphi_2$ are not equal to $\frac{\pi}{2}$. We get
    \begin{align}
    \label{line eq1}
        &t\Vert z-w\Vert<\frac{\max(d_1, d_2)}{\cos(\varphi_1)}\leq\frac{2N}{\cos(\varphi_1)},\\
        \label{line eq2}
        & t\Vert z-w\Vert<\frac{\max(d_1, d_2)}{\cos(\varphi_2)}\leq\frac{2N}{\cos(\varphi_2)}
    \end{align}
    because we assumed both $w$ and $\tilde{w}$ are inside the strip. We conclude that
    \begin{equation*}
        \Vert z-w\Vert<\frac{2N}{t\cos(\frac{\theta}{2})}
    \end{equation*}
    since $\varphi_1+\varphi_2=\theta$. If one of the $\varphi_i's$ is equal to $\frac{\pi}{2}$, then the other one is $\theta-\frac{\pi}{2}$ and this proves
    \begin{equation*}
        \Vert z-w\Vert<\frac{2N}{t\sin(\theta)}.
    \end{equation*}
    Finally, suppose both of the points are on one side of $l$ and define the angles $\varphi_i$'s as in the previous case. In this case, we have $\varphi_1+\varphi_2=\pi-\theta$. We get the same inequalities as in \eqref{line eq1} and \eqref{line eq2}; hence,
    \begin{equation*}
        \Vert z-w\Vert<\frac{2N}{t\sin(\frac{\theta}{2})}.
    \end{equation*}
    This concludes the proof with 
    \begin{equation*}
        b(N, t, \theta)=\frac{2N}{t\cdot \min\big(\cos(\frac{\theta}{2}), \sin(\theta), \sin(\frac{\theta}{2})\big)}.
    \end{equation*}
\end{proof}

Let $\gamma_1,\gamma_2,\gamma_3,\gamma_4$ be four smooth periodic curves in $\mathbb{C}$. Then the symplectic form $dx\wedge dy$ vanishes on each of these curves, so we get two Lagrangians $\gamma_1\times\gamma_3$ and $\gamma_2\times\gamma_4$ in $(\mathbb{C}^2, \omega)$. We denote the balanced inscriptions of a type $(t,\theta)$ trapezoid in the quadruple $\gamma_1,\gamma_2,\gamma_3,\gamma_4$ by $\mathcal{I}(\gamma_1,\gamma_2,\gamma_3,\gamma_4)$. 
Observe that the only fixed points of $\varphi_\theta$ are the diagonal points $(z,z)$ in $\mathbb{C}^2$.
Hence, there is a one-to-one correspondence between the union of the balanced inscriptions with $\gamma_1(\mathbb{R})\cap\gamma_2(\mathbb{R})\cap \gamma_3(\mathbb{R})\cap \gamma_4(\mathbb{R})$ and the Lagrangian intersection
\begin{equation*}
    \gamma_2\times\gamma_4\cap \varphi_\theta(\gamma_1\times\gamma_3)
\end{equation*}
by equation \eqref{the flow equation}. We have a translation map $T$ on $\mathbb{C}^2$ given by
\begin{equation*}
    T(z,w) = (z+\sqrt{-1}, w+\sqrt{-1}).
\end{equation*}
This is a symplectomorphism with respect to $\omega$, and we have
\begin{equation*}
    F\circ T= F.
\end{equation*}
Let $X$ denote the quotient of $\mathbb{C}^2$ by $T$; the symplectic form $\omega$ reduces to a symplectic form on $X$ which we also denote by $\omega$ without loss of generality. The Hamiltonian $F$ is invariant under $T$, so it defines a Hamiltonian on $(X,\omega)$ that we denote by $F$ too. Furthermore, let $\varphi_\theta$ be the time $\theta$ flow of $F$ in $(X,\omega)$ as well. We have the quotient map $\pi\colon\mathbb{C}^2\to X$. Let $j$ be the standard complex structure on $\mathbb{C}^2$; since $T$ is holomorphic, $j$ descends to a complex structure that we also denote by $j$.

We assumed each $\gamma_i$ is a periodic curve, so we get two embedded Lagrangians in $(X,\omega)$ for each of $\gamma_1\times\gamma_3$ and $\gamma_2\times\gamma_4$. We denote these Lagrangians by $L_{13}$ and $L_{24}$ respectively, and define $L_{13}^{\theta}\coloneqq \varphi_\theta(L_{13})$. Hence, we get
\begin{equation}
\label{equality of intersections}
    \frac{\mathcal{I}(\gamma_1,\gamma_2,\gamma_3,\gamma_4)\bigcup (\gamma_1(\mathbb{R})\cap \gamma_2(\mathbb{R})\cap \gamma_3(\mathbb{R})\cap \gamma_4(\mathbb{R}))}{\text{Translation by $\sqrt{-1}$}} = L_{24}\cap L_{13}^\theta.
\end{equation}
\begin{rem}
    Note that $(X,\omega)$ is symplectomorphic to the cotangent bundle of an infinite cylinder, and each of the $L_{ij}$'s is a Lagrangian $S^1\times\mathbb{R}$ in $X$.  
\end{rem}
Our goal is to show the set $\mathcal{I}(\gamma_1,\gamma_2,\gamma_3,\gamma_4)$ is non-empty if the area formula holds, and the intersection of our curves is empty. Hence, it suffices to show the Lagrangian intersection $L_{24}\cap L_{13}^\theta$ is non-empty under the area criterion. We will prove this using Lagrangian Floer homology for a modified version of the pair $(L_{24}, L^\theta_{13})$. In order to define the Floer homology, we need the intersection in equation \eqref{equality of intersections} to be compact.
\begin{lem}
\label{compactness of intersections}
    Suppose $\gamma_1,\gamma_2,\gamma_3,\gamma_4$ are four periodic curves in $\mathbb{C}$. Then the intersection $L_{24}\cap L_{13}^{\theta}$ is compact.
\end{lem}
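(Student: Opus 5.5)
We show that $L_{24}\cap L_{13}^{\theta}$ is a closed subset of $X$ whose lift to $\mathbb{C}^2$ lies in a set that is bounded up to translation by $T$; its image in $X$ is then compact. Closedness is immediate: $L_{24}$ and $L_{13}$ are closed embedded submanifolds of $X$ (each $\gamma_i$ is a proper embedding, since $\gamma_i(t+1)=\gamma_i(t)+\sqrt{-1}$ forces $|\gamma_i(t)|\to\infty$), and $\varphi_\theta$ is a diffeomorphism. So the whole proof reduces to a boundedness statement, which we get from Lemma \ref{bounded band similarity type}.

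First, the real parts are bounded. By periodicity, $\operatorname{Re}\gamma_i$ is a continuous $1$-periodic function of $t$. Hence there is $N>0$ with $|\operatorname{Re}(p)|\le N$ for every $p\in\gamma_i(\mathbb{R})$ and every $1\le i\le 4$.

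Next, lift a point of $L_{24}\cap L_{13}^{\theta}$ to $(\tilde z,\tilde w)=\varphi_\theta(z,w)\in\mathbb{C}^2$ with $z\in\gamma_1$, $w\in\gamma_3$, $\tilde z\in\gamma_2$, $\tilde w\in\gamma_4$. Then all four points $z,w,\tilde z,\tilde w$ have real part in $[-N,N]$. Lemma \ref{bounded band similarity type} therefore gives $\Vert z-w\Vert\le b(N,t,\theta)$. Applying $T^k$ for a suitable integer $k$, we may normalize so that $\operatorname{Im}(z)\in[0,1]$. Then $z$ lies in the compact rectangle $[-N,N]\times[0,1]$, and $w$ lies within distance $b$ of it. By formula \eqref{the flow equation}, $\tilde z$ and $\tilde w$ are within distance $C=b\cdot\max(t,1-t)+|z|+|w|$-type bounds of the origin. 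So the lift $(z,w)$ lies in a compact set $K\subset\mathbb{C}^2$.

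Finally, the intersection in $X$ equals $\pi\big(K\cap (\gamma_1\times\gamma_3)\cap\varphi_\theta^{-1}(\gamma_2\times\gamma_4)\big)$, and the preimage here is closed in $K$, hence compact. Its image under the continuous map $\pi$ is compact. Equivalently, the intersection is a closed subset of the compact set $\pi(K)$.

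There is no real obstacle once Lemma \ref{bounded band similarity type} is available; it supplies the essential estimate. The only care needed is bookkeeping: the lemma bounds $\Vert z-w\Vert$ but not $z$ itself, and the missing control on $\operatorname{Im}(z)$ comes precisely from quotienting by $T$. This is why compactness holds in $X$ but not in $\mathbb{C}^2$.
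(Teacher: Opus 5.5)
Your proof is correct and follows essentially the same route as the paper: bound all real parts by $N$, use Lemma \ref{bounded band similarity type} to bound $\Vert z-w\Vert$, and use translation by $T$ to put $z$ in $[-N,N]\times[0,1]$. You also justify closedness, which the paper leaves implicit. One small slip: the set $\pi\big(K\cap(\gamma_1\times\gamma_3)\cap\varphi_\theta^{-1}(\gamma_2\times\gamma_4)\big)$ is $\varphi_{-\theta}$ of the intersection, not the intersection itself. You should therefore apply the homeomorphism $\varphi_\theta$ of $X$ to it, or, as the paper does, normalize the lift directly on the $\gamma_2\times\gamma_4$ side, since $\Vert z-w\Vert$ is preserved by $\varphi_\theta$.
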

\begin{proof}
    Since the curves are periodic, we can find a sufficiently large $N$ such that each $\gamma_i$ is inside the strip $[-N,N]\times \mathbb{R}$. Now suppose $(z,w)$ are two opposite vertices of a type $(t,\theta)$ trapezoid inscribed in $\gamma_i$'s. By Lemma \ref{bounded band similarity type}, $\Vert z-w\Vert$ is bounded by a constant $b(N,t,\theta)$. Consider a point $p\in L_{24}\cap L_{13}^{\theta}$; we can assume $p=\pi(z,w)$ with $z\in [-N,N]\times [0,1]$ because $\gamma_2$ is a periodic curve. Thus we get
    \begin{equation*}
        \Vert z\Vert\leq \sqrt{N^2+1},\hspace{1mm} \Vert w\Vert\leq \sqrt{N^2+1}+b(N,t,\theta).
    \end{equation*}
    We conclude that
    \begin{equation*}
        L_{24}\cap L_{13}^{\theta}\subset \pi(B_{\sqrt{N^2+1}}\times B_{\sqrt{N^2+1}+b(N,t,\theta)}).
    \end{equation*}
\end{proof}
Suppose $L$ is one of the Lagrangians $L_{24}$ or $L_{13}^\theta$. It is crucial for our construction to understand the map induced by inclusion on the fundamental groups $\pi_1(L)\to \pi_1(X)$. In particular, we have the following.
\begin{lem}
\label{Inclusion map induces iso on fundamental group}
    Let $\iota\colon L\to X$ denote the inclusion map. Then the map $\iota_{\#}\colon\pi_1(L)\to \pi_1(X)$ is an isomorphism, and $\pi_2(X,L)$ is trivial.
\end{lem}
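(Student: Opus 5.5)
The plan is to compute the homotopy types explicitly. First $X$: the quotient of $\mathbb{C}^2$ by the free, properly discontinuous $\mathbb{Z}$-action generated by $T$. Since $\mathbb{C}^2$ is contractible, $X$ is a $K(\mathbb{Z},1)$ with $\pi_1(X)\cong\mathbb{Z}$ generated by the image of a path from $p$ to $T(p)$. Concretely, $X$ deformation retracts onto the circle obtained from $\{(iy,iy): y\in\mathbb{R}\}$, and $\pi_2(X)=0$. The universal cover is $\pi\colon\mathbb{C}^2\to X$.

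Next $L$. For $L=L_{24}$, its preimage $\pi^{-1}(L_{24})=\gamma_2(\mathbb{R})\times\gamma_4(\mathbb{R})$ is diffeomorphic to $\mathbb{R}^2$, hence connected and simply connected. Because $\varphi_\theta$ commutes with $T$ (since $F\circ T=F$ and $T$ is a symplectomorphism, or directly from \eqref{the flow equation}), the same holds for $\pi^{-1}(L_{13}^\theta)=\varphi_\theta(\gamma_1(\mathbb{R})\times\gamma_3(\mathbb{R}))$. Thus $\pi|_{\pi^{-1}(L)}\colon \pi^{-1}(L)\to L$ is a connected covering with deck group $\mathbb{Z}$ generated by $T$, and $\pi^{-1}(L)$ is its universal cover. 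Hence $\pi_1(L)\cong\mathbb{Z}$, generated by the loop $\pi(\gamma_2(s),\gamma_4(s))$, $s\in[0,1]$, resp.\ its image under $\varphi_\theta$, which lifts to a path from a point $q$ to $T(q)$.

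The induced map $\iota_\#$ is then identified via deck transformations. A loop in $L$ lifts to a path in $\pi^{-1}(L)\subset\mathbb{C}^2$ from $q$ to $T^k(q)$, and the class of the loop in both $\pi_1(L)$ and $\pi_1(X)$ is $k$. So $\iota_\#$ is the identity $\mathbb{Z}\to\mathbb{Z}$, an isomorphism.

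For $\pi_2(X,L)$, I will use the long exact sequence
\[
\pi_2(L)\to\pi_2(X)\to\pi_2(X,L)\to\pi_1(L)\xrightarrow{\iota_\#}\pi_1(X).
\]
Since $\pi_2(X)=0$ (covered by $\mathbb{C}^2$) and $\iota_\#$ is injective, exactness gives $\pi_2(X,L)=0$. Alternatively, a disk with boundary on $L$ lifts to $\mathbb{C}^2$ with boundary in the contractible $\pi^{-1}(L)$, so it is null-homotopic rel boundary condition.

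The main point requiring care is the connectivity of $\pi^{-1}(L)$ and the fact that it is exactly $\gamma_2(\mathbb{R})\times\gamma_4(\mathbb{R})$ rather than a union of translates. This uses that the periodicity shifts both factors simultaneously, so $T$ preserves the product. One must also check that the product is all of the preimage: a priori the preimage of $L_{24}$ would be the union of $\gamma_2\times(\gamma_4+ik)$ over $k\in\mathbb{Z}$, but $\gamma_4+ik=\gamma_4$ as sets by periodicity, so the union collapses to the single product, which is connected. Everything else is routine covering-space theory.
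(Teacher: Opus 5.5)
Your proposal is correct and follows the paper's proof: you identify $\pi^{-1}(L)$ with the simply connected product, which makes it the universal cover of $L$ inside the universal cover $\mathbb{C}^2$ of $X$, and you then use the exact sequence $\pi_2(X)\to\pi_2(X,L)\to\pi_1(L)\to\pi_1(X)$ together with $\pi_2(X)=0$. The only minor differences are that you spell out the deck-transformation identification of $\iota_\#$, and that you treat $L_{13}^\theta$ directly via the $T$-equivariance of $\varphi_\theta$ (so $\pi^{-1}(L_{13}^\theta)=\varphi_\theta(\gamma_1(\mathbb{R})\times\gamma_3(\mathbb{R}))$), whereas the paper simply appeals to $L_{13}^\theta$ being a Hamiltonian deformation of $L_{13}$.
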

\begin{proof}
    We first prove the lemma for $L_{24}$. Note that $\mathbb{C}^2$ is the universal cover of $X$ and the covering map is $\pi$. We know $\pi^{-1}(L_{24})=\gamma_2\times\gamma_4$ because the curves are periodic. It follows that $\gamma_2\times\gamma_4$ is the universal cover of $L_{24}$, and the covering map is the restriction of $\pi$ to $\gamma_2\times\gamma_4$. This proves the first claim for $L_{24}$. Similarly, one can show the induced map on $\pi_1(L_{13})\to\pi_1(X)$ is an isomorphism, and this proves the claim for $L_{13}^\theta$ as well since it is a Hamiltonian deformation of $L_{13}$. For the second point, consider the following exact sequence
    \begin{equation*}
        \pi_2(X)\to \pi_2(X,L)\to \pi_1(L)\to \pi_1(X).
    \end{equation*}
    The last map is an isomorphism, and $\pi_2(X)$ is trivial, so $\pi_2(X,L)$ is trivial.
\end{proof}

We have the complex structure $j$ on $X$. Define a function $g$ from $\mathbb{C}^2\to \mathbb{R}$ by 
\begin{equation}
\label{g equation}
g(z,w) = t\operatorname{Re}(z) + (1-t)\operatorname{Re}(w)
\end{equation}
This function is invariant under $T$, so we get a function on $X$ which we denote by $g$ as well. Notice that the function 
\begin{equation}
\label{The proper exhaustion}
    r(x) = F(x) + g^2(x)
\end{equation}
is a proper function on $X$. Moreover, we have
\begin{equation}
\label{g does not change under phi}
    g\circ\varphi_{\theta}(z,w) = g(z,w).
\end{equation}
We conclude this section with the following lemma.
\begin{lem}
\label{The functions are PSH}
    The functions $F$ and $g^2$ are plurisubharmonic on $(X,j)$.
\end{lem}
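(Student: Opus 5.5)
Both functions are pulled back from $\mathbb{C}^2$ and are invariant under $T$, and the quotient map $\pi\colon\mathbb{C}^2\to X$ is a local biholomorphism for $j$. Plurisubharmonicity is a local condition, preserved under holomorphic changes of coordinates. So it suffices to check that $F$ and $g^2$, viewed as functions on $(\mathbb{C}^2,j)$ with the standard complex structure, are plurisubharmonic. Concretely, we check that the complex Hessian $\partial\bar\partial$ is positive semidefinite, or equivalently that $-d(dF\circ j)$ is a nonnegative $(1,1)$-form.

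For $F$, write $F(z,w)=\frac{t(1-t)}{2}|h(z,w)|^2$ with $h(z,w)=z-w$ holomorphic and linear. For any holomorphic $h$, the function $|h|^2$ is plurisubharmonic. Indeed, $\partial\bar\partial |h|^2 = \partial h\wedge\overline{\partial h}$, whose Hermitian form is $v\mapsto |dh(v)|^2\ge 0$. Explicitly, the Levi form of $F$ at any point, evaluated on a vector $(a,b)\in\mathbb{C}^2$, is a positive multiple of $|a-b|^2\geq 0$. Since $t(1-t)>0$, $F$ is plurisubharmonic. It is not strictly plurisubharmonic, since it degenerates along the diagonal direction $(1,1)$, but only plurisubharmonicity is claimed.

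For $g^2$, note that $g=\operatorname{Re}(\ell)$ with $\ell(z,w)=tz+(1-t)w$ holomorphic and linear, so $g$ is pluriharmonic. For a pluriharmonic $g$ we have $\partial\bar\partial g=0$, and hence
\begin{equation*}
\partial\bar\partial (g^2)=2g\,\partial\bar\partial g+2\,\partial g\wedge\bar\partial g = 2\,\partial g\wedge\bar\partial g .
\end{equation*}
Its Levi form on $v$ is $2|\partial g(v)|^2\ge 0$. Alternatively, $g^2=\frac14(\ell+\bar\ell)^2$, and the Levi form on $v$ is a positive multiple of $|d\ell(v)|^2=|ta+(1-t)b|^2$. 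Either way, $g^2$ is plurisubharmonic.

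There is no real obstacle. The only point requiring care is that plurisubharmonicity descends to $X$. This holds because $\pi$ is a holomorphic covering map and both functions are $T$-invariant, so on each evenly covered open set the function on $X$ is the composition of a plurisubharmonic function with a biholomorphism. One should also fix the sign convention, namely $\omega_F=-d(dF\circ j)\ge 0$ on complex lines. With the standard convention this agrees with the positivity of the Levi forms computed above.
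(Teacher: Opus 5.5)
Your proof is correct, and its skeleton matches the paper's. Both reduce to $\mathbb{C}^2$ because $\pi$ is a holomorphic local diffeomorphism and the functions are $T$-invariant, and then check positivity explicitly. The paper composes with an arbitrary holomorphic map $f=(f_1,f_2)\colon\Omega\to\mathbb{C}^2$ and computes $\Delta(u^2+v^2)=2|\nabla u|^2+2|\nabla v|^2$ for harmonic $u,v$. You compute the Levi form instead. These are the same computation, since $\Delta(\phi\circ f)$ is $4$ times the Levi form of $\phi$ evaluated on $f'$, and the disc version is the form in which the lemma is used later for $F\circ u$ and $g^2\circ u$.

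The one real difference is in the $g^2$ case, and there your version is the more careful one. The paper splits $g$ into $g_1=\operatorname{Re}(z)$ and $g_2=\operatorname{Re}(w)$ and checks only $g_1^2\circ f$ and $g_2^2\circ f$. However, $g^2=t^2g_1^2+2t(1-t)g_1g_2+(1-t)^2g_2^2$, and the cross term $g_1g_2$ is not plurisubharmonic: on the complex line $w=-z$ it equals $-(\operatorname{Re} z)^2$. Your observation that $g=\operatorname{Re}(tz+(1-t)w)$ is itself pluriharmonic is what actually closes this case, since then $\partial\bar\partial(g^2)=2\,\partial g\wedge\bar\partial g$ has nonnegative Levi form $2|\partial g(v)|^2$. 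In the paper's language, the fix is to apply its $u^2$ computation to $u=\operatorname{Re}(tf_1+(1-t)f_2)$.
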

\begin{proof}
    Since $\pi\colon\mathbb{C}^2\to X$ is a local diffeomorphism and holomorphic, it suffices to prove the corresponding lifts of these functions on $\mathbb{C}^2$ are plurisubharmonic. Consider a holomorphic map $f\colon\Omega\to\mathbb{C}^2$ defined on an open domain $\Omega\subset\mathbb{C}$; we can write $f=(f_1,f_2)$ for two analytic functions $f_1$ and $f_2$. 
    Define $g_1, g_2\colon\mathbb{C}^2\to \mathbb{R}$ by
    \begin{equation}
        g_1(z,w) = \operatorname{Re}(z),\hspace{2mm} g_2(z,w) = \operatorname{Re}(w).
    \end{equation}
    It suffices to prove each of the functions $F\circ f, g_1^2\circ f$, and $g_2^2\circ f$ are subharmonic. We have
    \begin{equation*}
        F\circ f = \frac{t(1-t)\Vert f_1-f_2\Vert^2}{2}.
    \end{equation*}
    We can write $f_1-f_2 = (u,v)$ where $u$ and $v$ are real harmonic functions on $\Omega$ because $f_i$'s are analytic. Hence, we get
    \begin{align*}
        &\Delta (\Vert f_1-f_2\Vert^2) = \Delta u^2+\Delta v^2 = \\2(u_x^2+uu_{xx})+2(u_y^2+uu_{yy})+
        & 2(v_x^2+vv_{xx})+2(v_y^2+vv_{yy}) = 2(u_x^2+u_y^2+v_x^2+v_y^2)\geq 0 .
    \end{align*}
    Similarly, if we write $f_1 = (u, v)$, we have
    \begin{equation*}
        g_1^2\circ f_1 = u^2,
    \end{equation*}
    and we get
    \begin{equation*}
        \Delta(u^2) = (u_x^2+uu_{xx})+2(u_y^2+uu_{yy})=2(u_x^2+u_y^2)\geq 0.
    \end{equation*}
    The case for $g_2^2\circ f$ can be proved in the same manner.
\end{proof}

\section{Lagrangian Floer Homology}
\label{sec: Lag Floer}

For a brief discussion of Lagrangian Floer homology, see \cite{11}.
We will work with mod $2$ coefficients for Floer homology in the following. We will construct Lagrangian Floer homology for a pair of non-compact Lagrangians $(L_1,L_2)$ in $X$.
In order to define Lagrangian Floer homology for a pair $(L_1, L_2)$, one needs to show holomorphic strips with boundary on these two Lagrangian have bounded image. Since the two Lagrangians are non-compact, this is only possible if we can control the behavior of $L_1$ and $L_2$ at infinity; see \cite{RittPaper,5} for more details.
The one form 
\begin{equation}
    \lambda = t\cdot x_1dy_1+(1-t)\cdot x_2dy_2
\end{equation}
on $\mathbb{C}^2$ is invariant under the action by $T$. Hence, this descends to a one form on $X$, and we have $d\lambda = \omega$.
For a Lagrangian $L$ in $X$, $\lambda|_{L}$ is a closed one form, and it gives us an element in $H^1_{dR}(L)$; we call this the area class of $L$.
Let $(L_1, L_2)$ be a pair of Lagrangians in $(X,\omega)$, and define the path space $\mathcal{P}(L_1,L_2)$ by
\begin{equation*}
    \{x\in W^{1,2}([0,1], X)\hspace{1mm}|\hspace{1mm} x(0)\in L_1,\hspace{1mm} x(1)\in L_2\}.
\end{equation*}
Fix a Hamiltonian $H\colon X\times[0,1]\to\mathbb{R}$. The symplectic action one form on $\mathcal{P}(L_1, L_2)$ is defined by
\begin{equation*}
    \omega_{H}(\xi) = \int_0^1 \omega(\xi, \Dot{x}(t)-X_{H}(x(t),t))dt
\end{equation*}
for every $x\in \mathcal{P}(L_1, L_2)$ and $\xi\in T_x\mathcal{P}(L_1, L_2)$. This is a closed one form on the path space between two Lagrangians, but it is not exact in general. 
\begin{lem}
\label{Condition of Action being exact}
    Suppose $(L_1, L_2)$ is a pair of Lagrangians in $X$, and $H$ is a time-dependent Hamiltonian on $X$. Furthermore, assume the inclusion map $\iota\colon L_i\to X$ induces an isomorphism on the fundamental group for each $i$. 
    Then $\omega_H$ is exact iff the following diagram commutes.
    \[
\begin{tikzcd}
H_1(L_1) \arrow[rr, "h"] \arrow[dr, "\lambda"'] & & H_1(L_2) \arrow[dl, "\lambda"] \\
& \mathbb{R} &
\end{tikzcd}
\]
where the map $h$ is defined via the inclusion isomorphisms $\iota\colon H_1(L_i)\to H_1(X)$.
\end{lem}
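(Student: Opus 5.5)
Since exactness of a closed one-form is equivalent to the vanishing of its periods, I will show that $\omega_H$ is exact iff it integrates to zero over every loop in $\mathcal{P}(L_1,L_2)$, and then compute these periods in terms of the area classes.

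\textbf{Step 1: the loops of the path space.} First I identify the relevant loops. A loop $\Gamma\colon S^1\to\mathcal{P}(L_1,L_2)$ is a map $u\colon S^1\times[0,1]\to X$ with $u(s,0)\in L_1$ and $u(s,1)\in L_2$. Its two boundary loops $a=u(\cdot,0)\subset L_1$ and $b=u(\cdot,1)\subset L_2$ are freely homotopic in $X$, via the cylinder $u$ itself. Since $\iota_\#$ is an isomorphism for each $L_i$, the class $[b]\in H_1(L_2)$ equals $h([a])$, where $[a]\in H_1(L_1)$. Conversely, every class $c\in H_1(L_1)$ arises this way. To see this, pick a loop $a$ in $L_1$ representing $c$ and a loop $b$ in $L_2$ representing $h(c)$. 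These are homotopic in $X$ because $\pi_1(X)\cong\mathbb{Z}$ is abelian, so free homotopy classes agree with $H_1$. A homotopy between them gives a cylinder $u$ of the required type, after connecting base points; this uses the fact that $L_1$ and $L_2$ are connected.

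\textbf{Step 2: computing $\int_\Gamma\omega_H$.} With $\xi=\partial_s u$ and $\dot x=\partial_t u$,
\[
\int_\Gamma\omega_H=\int_{S^1\times[0,1]}u^*\omega-\int_{S^1}\int_0^1 \omega(\partial_s u, X_H)\,dt\,ds .
\]
The Hamiltonian term equals $\int_{S^1}\int_0^1 dH_t(\partial_s u)\,dt\,ds=\int_0^1\int_{S^1}\partial_s(H_t\circ u)\,ds\,dt=0$, since $s$ runs over a circle. Because $\omega=d\lambda$, Stokes' theorem gives, up to an orientation sign,
\[
\int u^*\omega=\int_b\lambda-\int_a\lambda=\lambda(h[a])-\lambda([a]),
\]
where $\lambda|_{L_i}$ is closed, so that these integrals depend only on homology classes.

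\textbf{Step 3: conclusion.} By Step 2, all periods of $\omega_H$ vanish iff $\lambda\circ h=\lambda$ on every class $[a]$ that arises in Step 1. By the surjectivity part of Step 1 this means all of $H_1(L_1)$, which is exactly the commutativity of the diagram. Finally, a closed one-form on the path space, which is a Banach manifold, is exact iff its integral over every loop vanishes; here we define the primitive by integrating along paths within each connected component.

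The main obstacle is Step 1, namely the correct bookkeeping of loops. Each component of $\mathcal{P}(L_1,L_2)$ must be treated separately, and I must check that any loop in any component yields a pair $(a,b)$ with $[b]=h[a]$. For this I would argue with $\pi_1$ injectivity: the boundary loops are conjugate in $\pi_1(X)$, which is abelian, so they are equal there. I also need the surjectivity within each component, which I get by concatenating a fixed base path with the constructed cylinder. Everything else is Stokes' theorem.
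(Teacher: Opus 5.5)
Your proof is correct and follows the paper's argument: the boundary loops of the cylinder satisfy $[b]=h[a]$, the Hamiltonian term vanishes by periodicity in $s$, and Stokes' theorem for $\omega=d\lambda$ identifies the period with $\lambda(h[a])-\lambda([a])$. Your Step~1 goes slightly further than the paper by realizing every class of $H_1(L_1)$ as the boundary of such a cylinder, which makes explicit the ``exact implies commutative'' direction that the paper leaves implicit.
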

\begin{proof}
    Take an arbitrary closed path $u\colon S^1\to \mathcal{P}(L_1, L_2)$; it suffices to show $\int_u\omega_H=0$. This is a cylinder $u\colon S^1\times[0,1]\to X$ with one end on $L_1$ and the other end on $L_2$. We denote each of these ends by $\beta_i$ respectively for $i=1,2$. Observe that $u$ is a homotopy between $\beta_1$ and $\beta_2$ in $X$, so we get $\beta_2=h(\beta_1)$
    by definition. We have
    \begin{equation*}
        0=\omega_{H}[u] =\int_{S^1}\int_0^1\omega(u_s, u_t-X_{H}(u,t))dtds =
   \int_u\omega-\int_0^1\int_{S^1}\frac{\partial H(u,t)}{\partial s}dsdt = \int_u\omega = \lambda(\beta_2)-\lambda(\beta_1),
    \end{equation*}
    where in the last step, we applied Stokes' theorem.
\end{proof}
\begin{prop}
\label{The area Lagrangian proposition}
    Let $(\gamma_1,\gamma_2,\gamma_3,\gamma_4)$ be a quadruple of smooth periodic curves in $\mathbb{C}$, and consider the corresponding Lagrangians $L_{24}$ and $L_{13}^\theta$ in $X$. Then the symplectic action one form on $\mathcal{P}(L_{24}, L_{13}^\theta)$ is exact iff 
    \begin{equation}
    \label{The area Lagrangian equation}
        t(\int_{\Tilde{\gamma}_1}xdy-\int_{\Tilde{\gamma}_2}xdy)+(1-t)(\int_{\Tilde{\gamma}_3}xdy-\int_{\Tilde{\gamma}_4}xdy) = 0.
    \end{equation} 
\end{prop}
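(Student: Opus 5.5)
The plan is to apply Lemma \ref{Condition of Action being exact} with $L_1=L_{24}$ and $L_2=L_{13}^\theta$. Its hypothesis, that each inclusion induces an isomorphism on $\pi_1$, is supplied by Lemma \ref{Inclusion map induces iso on fundamental group}. Since $H_1(X)\cong\mathbb{Z}$ (generated by the loop covered by the deck translation $T$), each $H_1(L_i)\cong\mathbb{Z}$ and $h$ is an isomorphism of $\mathbb{Z}$'s. Exactness of $\omega_H$ is therefore equivalent to the single equality $\lambda(\beta_{24})=\lambda(\beta^\theta_{13})$. Here $\beta_{24}$ and $\beta^\theta_{13}$ are generators of $H_1(L_{24})$ and $H_1(L_{13}^\theta)$ that map to the same generator of $H_1(X)$, namely the class of a path from $p$ to $T(p)$.

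The key step is to compute these areas explicitly. For $L_{24}$, take the loop $s\mapsto \pi(\gamma_2(s),\gamma_4(s))$, $s\in[0,1]$. It closes up in $X$ because $(\gamma_2(1),\gamma_4(1))=T(\gamma_2(0),\gamma_4(0))$, so it represents the generator. Then
\begin{equation*}
\lambda(\beta_{24})=t\int_{\tilde\gamma_2}x\,dy+(1-t)\int_{\tilde\gamma_4}x\,dy .
\end{equation*}
Similarly, the loop $s\mapsto\pi(\gamma_1(s),\gamma_3(s))$ in $L_{13}$ gives $t\int_{\tilde\gamma_1}x\,dy+(1-t)\int_{\tilde\gamma_3}x\,dy$. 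Orientations are consistent: every $\tilde\gamma_i$ is traversed in the direction of increasing $y$, and all these loops lift to paths from a point to its $T$-translate.

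It remains to show that $\varphi_\theta$ does not change the $\lambda$-area of this class, i.e.\ $\lambda(\varphi_\theta\beta)=\lambda(\beta)$ for $\beta$ in $L_{13}$. The approach is to note that $\varphi_\theta$ is the time-$\theta$ flow of $F$ and to differentiate in $\theta$. By Cartan's formula,
\begin{equation*}
\frac{d}{d\theta}\varphi_\theta^*\lambda=\varphi_\theta^*\big(d\iota_{X_F}\lambda+\iota_{X_F}\omega\big)=\varphi_\theta^*\, d\big(\lambda(X_F)-F\big).
\end{equation*}
This is exact provided $\lambda(X_F)-F$ is a well-defined function on $X$. That holds because $X_F$ and $F$ are $T$-invariant and $\lambda$ is $T$-invariant. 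Hence the integral over the closed loop $\beta$ is constant in $\theta$. Equivalently, the trace of $\beta$ under the isotopy is a cylinder whose $\omega$-integral equals the integral of $dF$ along closed loops, which is zero.

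Combining these steps, exactness holds iff
\begin{equation*}
t\int_{\tilde\gamma_2}x\,dy+(1-t)\int_{\tilde\gamma_4}x\,dy=t\int_{\tilde\gamma_1}x\,dy+(1-t)\int_{\tilde\gamma_3}x\,dy,
\end{equation*}
which is exactly \eqref{The area Lagrangian equation}. I expect the main obstacle to be the Hamiltonian invariance step. One must check that $\lambda(X_F)$ is genuinely $T$-periodic, since $\lambda$ involves $x_i$ and $T$ only shifts $y_i$, so this should be fine. One must also ensure the loop classes are matched correctly under $h$, with consistent orientation, rather than off by a sign.
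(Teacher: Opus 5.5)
Your proposal is correct and follows the paper's proof: you apply Lemma~\ref{Condition of Action being exact}, with its hypothesis supplied by Lemma~\ref{Inclusion map induces iso on fundamental group}, and compute $\lambda$ on the loops $s\mapsto\pi(\gamma_2(s),\gamma_4(s))$ and $s\mapsto\pi(\gamma_1(s),\gamma_3(s))$ exactly as the paper does. The only difference is that you justify the passage from $L_{13}$ to $L_{13}^\theta$ explicitly with Cartan's formula (correctly, since $\lambda(X_F)=t(1-t)(x_1-x_2)^2$ and $F$ are $T$-invariant), whereas the paper simply asserts it because $L_{13}^\theta$ is a Hamiltonian deformation of $L_{13}$.
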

\begin{proof}
    We consider a curve $\beta_1$ that generates $H_1(L_{24})$, and $\beta_2=h(\beta_1)$ a generator of $H_1(L_{13})$ where $h$ is the map induced by the inclusion maps of $L_{24}$ and $L_{13}$ in $X$. Each $\gamma_i$ is an embedding of $\mathbb{R}$ into $\mathbb{C}$, and we define $\beta_1\colon [0,1]\to L_{24}$ by
    \begin{equation*}
        \beta_1(s) = \pi(\gamma_2(s), \gamma_4(s))
    \end{equation*}
    for $0\leq s\leq 1$. This is a closed path because $\gamma_i(s+1) = \gamma_i(s)+\sqrt{-1}$. The curve $\beta_2$ in $L_{13}$ is defined similarly, and we have $h(\beta_1) = \beta_2$. We get
    \begin{equation*}
        \lambda(\beta_1) = t\int_0^1x(\gamma_2(s))\dot{y}(\gamma_2(s))ds + (1-t)\int_0^1x(\gamma_4(s))\dot{y}(\gamma_4(s))ds=t\int_{\Tilde{\gamma}_2}xdy+(1-t)\int_{\Tilde{\gamma}_4}xdy.
    \end{equation*}
    Analogously, we compute
    \begin{equation*}
        \lambda(\beta_2) = t\int_0^1x(\gamma_1(s))\dot{y}(\gamma_1(s))ds + (1-t)\int_0^1x(\gamma_3(s))\dot{y}(\gamma_3(s))ds=t\int_{\Tilde{\gamma}_1}xdy+(1-t)\int_{\Tilde{\gamma}_3}xdy.
    \end{equation*}
    Thus the claim follows for $\mathcal{P}(L_{24}, L_{13})$ by Lemma \ref{Condition of Action being exact}. The same holds for $\mathcal{P}(L_{24}, L_{13}^\theta)$ since $L_{13}^\theta$ is a Hamiltonian perturbation of $L_{13}$. 
\end{proof}
\begin{defi}
\label{Suitable pair Defintion}
    Consider a pair of non-compact Lagrangians $(L_1, L_2)$ in $X$. We say that these Lagrangians are suitable if they satisfy the following assumptions
    \begin{enumerate}
        \item The inclusion map $\iota\colon L_i\to X$ induces an isomorphism on $\pi_1(L_i)$ for each $i$.
        \item The intersection $L_1\cap L_2$ is compact. 
        \item The symplectic action one form on $\mathcal{P}(L_1, L_2)$ is exact.
        \item The function $g$ restricted to $L_i$ is bounded for each $i$ where $g$ is defined in \eqref{g equation}.
        \item There exists a positive constant $A$ such that if 
\begin{equation}
\label{the Hopf Lemma Ess Assumption}
    (x,v)\in (TL_1\cup TL_2)\cap \ker(dF)
\end{equation}
with $F(x)> A$, then $jv\in \ker(dF)$ where $F$ is the plurisubharmonic function defined in \eqref{Defintion of Our Major Hamiltonian}.
\end{enumerate}
\end{defi}
In the following, we assume $(L_1, L_2)$ is a pair of suitable Lagrangians in $X$.
Consider a primitive for $\omega_{H}$ on $\mathcal{P}(L_1, L_2)$, and denote it by $\mathcal{A}_{H}$; we call this the symplectic action functional.  
Critical points of $\mathcal{A}_{H}$ are in one-to-one correspondence with Hamiltonian chords of $H$ that start from $L_1$ and ends on $L_2$. We can identify these chords with
the set $L_1\cap (\phi_{H}^{1})^{-1}(L_2)$ where $\phi_{H}^{1}$ denotes the Hamiltonian flow of $H$ at $t=1$. Now consider a family of $\omega$-compatible time dependent almost complex structures $\{J_t\}_{0\leq t\leq 1}$ on $X$. This family induces a metric on $\mathcal{P}(L_1, L_2)$, and the gradient flow lines of $\mathcal{A}_{H}$ are in one-to-one correspondence with the solutions of

\begin{equation}
\label{CRF}
    u_s + J_t(u)(u_t-X_{H}(u))=0,
\end{equation}
$u\colon\mathbb{R}\times[0,1]\to X$,
with boundary conditions 
\begin{equation}
    u(s,0) \in L_1 \hspace{1mm} u(s,1)\in L_2.
\end{equation}
This is called the perturbed Cauchy-Riemann equation (with respect to $\{J_t\}_{0\leq t\leq 1}$ and $H$) with boundary conditions on the pair $(L_1, L_2)$; we refer the unfamiliar reader to \cite[Section 1.3]{12}.
If $u(s,t)$ is a solution to \eqref{CRF}, then $u(s+s_0,t)$ is a solution for every $s_0$ as well.
For a solution $u$ to \eqref{CRF}, we define its energy by
\begin{equation*}
    E(u) = \int_{-\infty}^{\infty}\int_0^1 \Vert u_s\Vert^2 dtds
\end{equation*}
where $\Vert u_s\Vert^2$ is measured with respect to the Riemannian metric induced by $J_t$. If we have a solution with $E(u)<\infty$, then one can show there are $x,y\in L_1$ with
\begin{equation*}
    \lim_{s\to -\infty} u(s,t) = \phi_H^t(x) \hspace{1mm} \lim_{s\to +\infty} u(s,t) = \phi_H^t(y),
\end{equation*}
$\phi_H^1(x)\in L_2$, $\phi_H^1(y)\in L_2$. If we write $x(t) = \phi_H^t(x)$ and $y(t)=\phi_H^t(y)$ for $t\in [0,1]$, then  
\begin{equation}
\label{Action-Energy}
    E(u) = \mathcal{A}_{H}(y(t)) - \mathcal{A}_{H}(x(t)). 
\end{equation}

\begin{defi}
    We say a time-dependent Hamiltonian $H$ and a family of almost complex structures $\{J_t\}_{0\leq t\leq 1}$ on $X$ are admissible for a suitable pair $(L_1, L_2)$ if the following conditions hold.
    \begin{enumerate}
        \item $\phi_{H}^1(L_1)$ meets $L_2$ transversely, and $H$ is compactly supported.
        \item There exists a compact set $K\subset X$ such that $J_t$ agrees with $j$ on $X\setminus K$ for every $t$.
        \item The moduli spaces of finite energy solutions to \eqref{CRF} with boundary conditions on $(L_1, L_2)$ are smooth manifolds. 
    \end{enumerate}
\end{defi}

It is a standard fact in Floer theory that a generic choice of $(H, J_t)$ is admissible; we refer the unfamiliar reader to \cite{TrinFloerTheory} for more details.
Now consider a suitable pair of Lagrangians $(L_1,L_2)$ in $X$, and an admissible $(H, J_t)$ for this pair. Let $CF(L_1, L_2, H, J_t)$ be the vector space generated by the intersection points in\break 
$L_1\cap (\phi_{H}^{1})^{-1}(L_2)$ over $\mathbb{F}_2$. 
For a pair of points $(x,y)$ in this intersection, denote the moduli space of finite energy solutions to \eqref{CRF} that go from $x(t)$ to $y(t)$
by $\mathcal{M}(x,y)$, and define the unparametrized moduli space by
\begin{equation*}
    \widehat{\mathcal{M}}(x,y)= \mathcal{M}(x,y)/\mathbb{R} 
\end{equation*}
where we quotient by translation in the $s$ direction.
Define a morphism $\partial\colon CF(L_1, L_2, H, J_t)\to CF(L_1, L_2, H, J_t)$ by
\begin{equation*}
    \partial x = \sum_{y}\#\widehat{\mathcal{M}}_0(x,y)\cdot y
\end{equation*}
where $\widehat{\mathcal{M}}_0(x,y)$ is the zero dimensional part of the unparametrized moduli spaces  that go from $x$ to $y$.
\begin{prop}
\label{boundary squared is zero}
    If $(L_1, L_2)$ is a suitable pair in $X$, then $\partial^2 = 0$ for an admissible $(H, J_t)$.
\end{prop}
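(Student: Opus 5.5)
The plan is to follow the standard argument for $\partial^2=0$. For each generator $x$ and each $z$ with index difference two, the one-dimensional unparametrized moduli spaces $\widehat{\mathcal{M}}_1(x,z)$ admit a compactification whose boundary is exactly $\bigcup_y \widehat{\mathcal{M}}_0(x,y)\times\widehat{\mathcal{M}}_0(y,z)$. Counting boundary points of a compact one-manifold mod $2$ then gives $\langle\partial^2 x,z\rangle=0$.

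Three ingredients are needed.
\begin{enumerate}
\item \emph{Uniform energy bound.} Condition (3) of suitability provides a global action functional $\mathcal{A}_H$. By \eqref{Action-Energy}, every strip from $x$ to $y$ then has energy $\mathcal{A}_H(y)-\mathcal{A}_H(x)$, which depends only on the endpoints. Since $L_1\cap(\phi_H^1)^{-1}(L_2)$ is finite (by condition (2) together with transversality and compact support of $H$), these energies are uniformly bounded.
\item \emph{No bubbling.} Condition (1) together with Lemma \ref{Inclusion map induces iso on fundamental group} gives $\pi_2(X,L_i)=0$, and $\pi_2(X)=0$. Combined with exactness of $\omega$, this means there are no nonconstant holomorphic discs or spheres. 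Indeed, the energy of a disc with boundary on $L_i$ is $\int_{\partial}\lambda$, and this vanishes because the boundary loop is null-homotopic in $X$, hence in $L_i$.
\item \emph{$C^0$ bound.} All strips with given endpoints must stay in a fixed compact set.
\end{enumerate}
Given these three, Gromov compactness on a compact region, with the usual gluing for admissible $(H,J_t)$, yields the compactification above.

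The $C^0$ bound is the main obstacle, and it is exactly where conditions (4) and (5) enter. Choose a compact set $K'$ containing $K$, the support of $H$, and the endpoints, on whose complement $u$ is $j$-holomorphic with the standard split form of \eqref{CRF}. By Lemma \ref{The functions are PSH}, $F$ and $g^2$ are plurisubharmonic, so $F\circ u$ and $g^2\circ u$ are subharmonic on $u^{-1}(X\setminus K')$. The bound splits into two parts.
\begin{enumerate}
\item \emph{Bound on $g$.} On the boundary of the strip, $g$ is bounded by condition (4). Along the ends, $u$ converges to chords inside $K'$. The maximum principle for $g^2\circ u$ on the open set where $u$ leaves $K'$ then bounds $g^2\circ u$ by the maximum of its values on $K'$ and on $L_1\cup L_2$.
\item \emph{Bound on $F$.} Enlarge $K'$ so that $F>A$ outside it. Suppose $F\circ u$ attains a maximum larger than $\max(A,\sup_{K'}F)$. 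By the strong maximum principle this maximum occurs on $\partial(\mathbb{R}\times[0,1])$, say at $(s_0,0)$, with $u(s_0,0)\in L_1$.
\end{enumerate}

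In the second case, the tangential derivative $\partial_s(F\circ u)$ vanishes there, so $v=u_s\in TL_1\cap\ker dF$. Since $u$ is holomorphic near that point, $u_t=ju_s$ up to sign, and condition (5) gives $jv\in\ker dF$. Hence the normal derivative $\partial_t(F\circ u)$ vanishes at $(s_0,0)$. This contradicts the Hopf boundary lemma for the subharmonic, non-constant function $F\circ u$. The case where $F\circ u$ is constant is excluded by the asymptotics.

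Since $r=F+g^2$ is proper, the two bounds confine every strip, and likewise every broken limit, to the compact set $\{r\le C\}$. The argument of \cite{5} and \cite{RittPaper} then applies verbatim. Once this is in place, the remaining compactness, gluing and transversality are standard and can be cited from \cite{12,TrinFloerTheory}.
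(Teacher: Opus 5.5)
Your proposal is correct and follows essentially the same route as the paper. The shared steps are: the action–energy bound from exactness, the $C^0$ bound from subharmonicity of $g^2$ and $F$ combined with condition (5) and Hopf's lemma at a boundary maximum, confinement via the proper function $r=F+g^2$, and no bubbling because $\pi_2(X)=\pi_2(X,L_i)=0$.

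One small imprecision: the enlarged $K'$ cannot be compact, because $F$ is not proper (it vanishes on the diagonal, which is non-compact in $X$). Take instead the closed set $K'\cup F^{-1}([0,A])$, as the paper effectively does, and use it only for the $F$-bound, since $g$ is unbounded on it.
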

\begin{proof}
    First, we show every holomorphic strip $u$ with boundary on $(L_1, L_2)$ and finite energy has 
    \begin{equation*}
        E(u)\leq C(L_1, L_2, H)
    \end{equation*}
    where $C$ depends only on $(L_1, L_2)$ and $H$. Let $(x(t),y(t))$ be the two ends of $u$; then we have
    \begin{equation*}
        E(u) = \mathcal{A}_H(y(t)) - \mathcal{A}_H(x(t)).
    \end{equation*}
    Note that $\mathcal{I}\coloneqq L_1\cap(\phi_H^1)^{-1}(L_2)$ is compact because $L_1\cap L_2$ is compact, and $H$ is compactly supported. Hence,
    \begin{equation*}
        E(u)\leq 2\cdot\sup_{\mathcal{I}}\vert\mathcal{A}_H\vert=C(L_1,L_2,H).
    \end{equation*}
    The next step is to show that a holomorphic strip $u$ stays inside a compact set that only depends on $(L_1, L_2)$ and $(H, J_t)$. Let $K$ be the closure of
    \begin{equation*}
        \{x\in X\colon \exists t \hspace{1mm} \text{with} \hspace{1mm} J_t(x)\neq j\}\bigcup \text{supp}(H)\bigcup \mathcal{I}.
    \end{equation*}
    This is a compact set because of our choices. Note that if $\Omega\coloneqq u^{-1}(X\setminus K)$ is not empty, then $u$ restricted to $\Omega$ satisfies
    \begin{equation*}
        u_s + ju_t = 0
    \end{equation*}
    which is the ordinary Cauchy-Riemann equation.
    Suppose $u$ is a holomorphic strip, and $\Omega\neq \emptyset$. Then $g^2\circ u$ is a subharmonic function on $\Omega$ by Lemma \ref{The functions are PSH}. Hence,
    \begin{equation*}
        \sup_{\Omega} g^2\circ u\leq\sup_{\partial\Omega}g^2\circ u\leq \max(\sup_{L_1}g^2, \sup_{L_2} g^2, \sup_{K}g^2)\coloneqq M(L_1, L_2, K).
    \end{equation*}
    In the last step, we used the fact that $g$ is bounded on the pair $(L_1, L_2)$. Similarly, $F\circ u$ is subharmonic by Lemma \ref{The functions are PSH}. Let $A$ be the constant determined by the last assumption of Definition \ref{Suitable pair Defintion}. We know $F\circ u$ does not have an interior maximum point, and assume $p\in\partial\Omega$ is a boundary maximal point. We claim 
    \begin{equation*}
        u(p)\in K\bigcup F^{-1}[0,A].
    \end{equation*}
    By contradiction, assume this is not the case; then we must have $u(p)\in L_1$ or $u(p)\in L_2$. Without loss of generality, suppose $u(p)\in L_1$. Since $p$ is a maximum point, we get
    \begin{equation*}
        u_s(p)\in TL_1\cap\ker(dF).
    \end{equation*}
    Thus $ju_s=u_t$ is also in $\ker(dF)$ by equation \eqref{the Hopf Lemma Ess Assumption}, but this is a contradiction because we must have
    \begin{equation*}
        \frac{\partial(F\circ u)}{\partial t}(p) = dF(u_t)\neq 0
    \end{equation*}
    by Hopf's lemma, so we have
    \begin{equation*}
        \sup_{\Omega} F\circ u \leq \max (A, \sup_K F) = D(L_1, L_2, K)
    \end{equation*}
    Therefore, for every holomorphic strip $u$ with boundary on $(L_1, L_2)$, we get
    \begin{equation*}
        \sup_{\mathbb{R}\times [0,1]} r\circ u\leq D+M
    \end{equation*}
    where $r$ is the proper function defined in equation \eqref{The proper exhaustion}. Now we can compactify every moduli space $\mathcal{M}(x,y)$ by Gromov's compactness theorem because every element in $\mathcal{M}(x,y)$ has finite energy and its image is inside a compact set. Finally, we know $\pi_2(X), \pi_2(X,L_1)$ and $\pi_2(X,L_2)$ are trivial, so there are no sphere or disk bubbles. We conclude that $\partial^2=0$; see \cite{AudinDomanBookd} for more details.  
\end{proof}
Let $HF(L_1, L_2, H, J_t)$ denote the homology of $(CF(L_1, L_2, H, J_t), \partial)$.
\begin{prop}
    The homology $HF(L_1, L_2, H, J_t)$ does not depend on the choice of $(H, J_t)$.
\end{prop}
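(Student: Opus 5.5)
The plan is the standard continuation-map argument, adapted to the non-compact setting of a suitable pair $(L_1,L_2)$. Take two admissible choices $(H^0,J^0_t)$ and $(H^1,J^1_t)$. Choose a homotopy $(H^s,J^s_t)_{s\in\mathbb{R}}$ that equals $(H^0,J^0_t)$ for $s\le -R$ and $(H^1,J^1_t)$ for $s\ge R$. We require that $H^s$ is supported in a fixed compact set for all $s$, and that $J^s_t=j$ outside a fixed compact set $K'$ for all $s,t$. This is possible because each $H^i$ is compactly supported and each $J^i_t$ agrees with $j$ outside a compact set; we interpolate with cutoff functions. We then define the continuation map $\Phi\colon CF(L_1,L_2,H^0,J^0_t)\to CF(L_1,L_2,H^1,J^1_t)$ by counting rigid solutions of the $s$-dependent equation
\begin{equation*}
u_s+J^s_t(u)\big(u_t-X_{H^s}(u)\big)=0
\end{equation*}
with boundary on $(L_1,L_2)$, asymptotic to chords of $H^0$ at $-\infty$ and chords of $H^1$ at $+\infty$. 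For generic homotopy data these moduli spaces are smooth. The usual analysis of the ends of the one-dimensional moduli spaces shows that $\Phi$ is a chain map. A homotopy of homotopies gives a chain homotopy between continuation maps. Concatenating the homotopy with its reverse gives $\Phi\circ\Psi\simeq\mathrm{id}$, so $\Phi$ is an isomorphism on homology.

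The only non-standard input is compactness, which has two parts.

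\emph{Energy bound.} Exactness of $\omega_H$ (condition 3 of Definition \ref{Suitable pair Defintion}) holds for every $H^s$, since the proof of Lemma \ref{Condition of Action being exact} does not depend on $H$. So we have actions $\mathcal{A}_{H^s}$ with a primitive chosen continuously in $s$. The standard identity then gives
\begin{equation*}
E(u)=\mathcal{A}_{H^1}(y)-\mathcal{A}_{H^0}(x)+\int_{\mathbb{R}}\int_0^1(\partial_sH^s)(u)\,dt\,ds .
\end{equation*}
The last term is bounded by $2R\sup|\partial_s H^s|$, which is finite because $\partial_sH^s$ is compactly supported in $s$ and bounded. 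The chord sets are compact, as in the proof of Proposition \ref{boundary squared is zero}. Hence we get a uniform energy bound.

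\emph{$C^0$ bound.} Let $K$ be the closure of the union of $K'$, the supports of all $H^s$, and the chord sets. On $\Omega=u^{-1}(X\setminus K)$ the map $u$ is $j$-holomorphic. The maximum-principle argument of Proposition \ref{boundary squared is zero} then applies verbatim. Subharmonicity of $g^2\circ u$ (Lemma \ref{The functions are PSH}) together with boundedness of $g$ on $L_i$ bounds $g^2\circ u$. Subharmonicity of $F\circ u$ together with the Hopf-lemma argument, using condition 5 of suitability, bounds $F\circ u$ by $\max(A,\sup_K F)$. Properness of $r=F+g^2$ then confines all continuation strips, and all strips used for the homotopy of homotopies, to a fixed compact set.

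With energy and $C^0$ bounds in hand, Gromov compactness applies. Lemma \ref{Inclusion map induces iso on fundamental group} gives $\pi_2(X)=\pi_2(X,L_i)=0$, which rules out sphere and disk bubbling, so only broken trajectories appear. This yields the chain-map and chain-homotopy identities as in the compact exact case.

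I expect the main obstacle to be arranging the homotopy data so that the maximum principle holds uniformly in $s$. Concretely, $J^s_t$ must equal $j$ outside one $s$-independent compact set, and this set must also contain the supports of all $H^s$. This is what makes the Hopf-lemma step from Proposition \ref{boundary squared is zero} work for the $s$-dependent equation. Once that is done, the rest is standard (cf.\ \cite{AudinDomanBookd}).
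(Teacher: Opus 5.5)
Your proposal is correct and follows the same route as the paper. Both arguments build continuation maps from an $s$-dependent interpolation $(H^s,J^s_t)$ with $J^s_t=j$ outside a fixed compact set, get compactness from the maximum-principle argument of Proposition \ref{boundary squared is zero}, use a homotopy of homotopies for the chain homotopy, and obtain the inverse on homology by composing with the reverse interpolation (the constant interpolation induces the identity). You go slightly beyond the paper by writing out the energy identity with the $\int\int(\partial_sH^s)(u)\,dt\,ds$ term and by stating that the compact set containing $\operatorname{supp}H^s$ and the locus where $J^s_t\neq j$ must be uniform in $s$ and in the homotopy parameter; the paper only asserts these points by analogy.
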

\begin{proof}
    Take two admissible pairs $(H_0, J_t^0)$ and $(H_1, J_t^1)$. We interpolate between these pairs by a path $(H_s, J_t^s)$ such that
    \begin{align*}
        &(H_s, J_t^s) = (H_0, J_t^0) \hspace{2mm} \forall s<<0, \\
        &(H_s, J_t^s) = (H_1, J_t^1) \hspace{2mm} \forall s>>0, 
    \end{align*}
    and $J_t^s = j$ outside of a compact set for every $(s,t)$.
    We consider maps $u\colon\mathbb{R}\times[0,1]\to X$ with 
    \begin{equation}
    \label{Interpolation equation}
        u_s+J_t^s(u)(u_t-X_{H_s}(u))=0,
    \end{equation}
    $u(s,0)\in L_1$, and $u(s,1)\in L_2$ for every $s$. If $u$ has finite energy, then
    \begin{align*}
        &\lim_{s\to-\infty}u(s,t) = \phi_{H_0}^t(x),\\
        &\lim_{s\to+\infty}u(s,t) = \phi_{H_1}^t(y)
    \end{align*}
    with $x\in L_1\cap (\phi_{H_0}^1)^{-1}(L_2)$ and $y\in L_1\cap (\phi_{H_1}^1)^{-1}(L_2)$.
    We denote the moduli space of solutions to \eqref{Interpolation equation} with two ends on $x(t) = \phi_{H_0}^t(x)$ and $y(t) = \phi_{H_1}^t(y)$ by $\mathcal{N}(x,y)$. For a generic choice of the interpolation $(H_s, J_t^s)$, these spaces are smooth manifolds, and we define a map $c\colon CF(L_1, L_2, H_0, J_t^0)\to CF(L_1, L_2, H_1, J_t^1)$ by
    \begin{equation*}
        C(x) = \sum_y \#\mathcal{N}_0(x,y)\cdot y
    \end{equation*}
    where $\mathcal{N}_0$ is the zero dimensional part of these moduli spaces. We can compactify these moduli spaces by an argument similar to Proposition \ref{boundary squared is zero}, and this shows $C$ is a chain map. Hence, we get a map
    \begin{equation*}
        C_{*}\colon HF(L_1, L_2, H_0, J_t^0)\to HF(L_1, L_2, H_1, J_t^1).
    \end{equation*}
    The next step is to prove the induced map $C_*$ is independent of the choice of interpolation $(H_s, J_t^s)$. Suppose $(H_{s,0}, J_{t,0}^s)$ and $(H_{s,1}, J_{t,1}^s)$ are two interpolations between $(H_0, J_t^0)$ and $(H_1, J_t^1)$. We take a path of interpolations $(H_{s,r}, J_{t,r}^s)$ parametrized by $r\in [0,1]$ between these two interpolations such that
    \begin{align*}
        &(H_{s,r}, J_{t,r}^s) = (H_0, J_t^0) \hspace{2mm} \forall s<<0, \\
        &(H_{s,r}, J_{t,r}^s) = (H_1, J_t^1) \hspace{2mm} \forall s>>0
    \end{align*}
    for every $r\in [0,1]$ and $J_{t,r}^s$ being equal to $j$ outside of a compact subset of $X$. Consider the solutions $u\colon\mathbb{R}\times [0,1]\to X$ of 
    \begin{equation}
    \label{Chain homotopy eq}
        u_s + J_{t,r}^s(u)(u_t-X_{H_{s,r}}(u)) = 0
    \end{equation}
    with finite energy. For $x\in L_1\cap (\phi_{H_0}^1)^{-1}(L_2)$ and $y\in L_1\cap (\phi_{H_1}^1)^{-1}(L_2)$, let
    \begin{equation*}
        \mathcal{K}(x,y)\coloneqq  \{(r,u)| \exists r\in [0,1] \hspace{1mm} \text{s.t. $u$ solves \eqref{Chain homotopy eq} with $\lim_{s\to-\infty}u =x(t)$, $\lim_{s\to\infty}u = y(t)$}\}.
    \end{equation*}
    We can assume $\mathcal{K}(x,y)$ is a smooth manifold for a generic choice of $(H_{s,r}, J_{t,r}^s)$, and define a map \break$\Phi\colon CF(L_1, L_2, H_0, J_t^0)\to CF(L_1, L_2, H_1, J_t^1)$ by
    \begin{equation*}
        \Phi(x) = \sum_y\#\mathcal{K}_0(x,y)\cdot y
    \end{equation*}
    where $\mathcal{K}_0$ denotes the zero dimensional parts of these moduli spaces. By an analogous argument, we have
    \begin{equation*}
        C^1-C^0 = \partial\circ \Phi+\Phi\circ\partial
    \end{equation*}
    where $C^i$ is the chain map associated to $(H_{s,i}, J_{t,i}^s)$ for $i=0,1$. Thus $C^1$ and $C^0$ are chain homotopic and their induced map on homology are equal. If we consider the constant interpolation from $(H_0, J_t^0)$ to itself, then the corresponding map on $HF(L_0, L_1, H_0, J_t^0)$ is the identity. Hence, if $C$ is the chain map corresponding to an interpolation from $(H_0, J_t^0)$ to $(H_1, J_t^1)$, and $D$ is the chain map corresponding to an interpolation from $(H_1, J_t^1)$ to $(H_0, J_t^0)$, then both $D\circ C$ and $C\circ D$ are chain homotopic to the identity maps of $CF(L_1, L_2, H_0, J_t^0)$ and $CF(L_1, L_2, H_1, J_t^1)$.   
\end{proof}
We denote $HF(L_1, L_2, H, J_t)$ by $HF(L_1, L_2)$ from now on.
\begin{cor}
\label{Invariance under compactly supported perturbations}
    Suppose $(L_1, L_2)$ is a suitable pair and $\phi\colon [0,1]\times X\to X$ is a compactly supported Hamiltonian isotopy. Then $(L_1, \phi_1(L_2))$ is also a suitable pair, and we have
    \begin{equation*}
        HF(L_1, L_2)\cong HF(L_1, \phi_1(L_2)).
    \end{equation*}
\end{cor}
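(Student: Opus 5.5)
The proof splits into two parts. First I check that $(L_1,\phi_1(L_2))$ still satisfies each condition of Definition \ref{Suitable pair Defintion}. Then I deduce the isomorphism of Floer homologies from the independence of $HF$ from the admissible data.

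Write $L_2'=\phi_1(L_2)$ and let $K_\phi$ be a compact set containing the support of the isotopy, so $\phi_s=\mathrm{id}$ outside $K_\phi$ for all $s$.

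\emph{Condition 1.} The inclusion $L_2'\to X$ is the composition of $\phi_1|_{L_2}$ with the inclusion of $L_2$ and with $\phi_1^{-1}$. Since $\phi_1$ is isotopic to the identity, the induced map on $\pi_1$ agrees with that of $L_2$, which is an isomorphism.

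\emph{Condition 2.} We have
\[
L_1\cap L_2'\subset (L_1\cap L_2)\cup K_\phi ,
\]
and $L_1\cap L_2'$ is closed, so it is compact.

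\emph{Condition 3.} Exactness of the action form depends only on the diagram of Lemma \ref{Condition of Action being exact}. The map $h$ is unchanged because $\phi_1$ is isotopic to the identity. The area class is unchanged because $\phi_1$ is Hamiltonian: $\phi_1^*\lambda-\lambda$ is exact. Concretely, for a loop $\beta$ in $L_2$, the trace of the isotopy is a cylinder whose $\omega$-area is the flux of a Hamiltonian isotopy, which is zero. Hence $\lambda(\phi_1\beta)=\lambda(\beta)$.

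\emph{Conditions 4 and 5.} These are conditions on $L_2'$ alone away from compact sets. Condition 4 holds because $L_2'$ coincides with $L_2$ outside $K_\phi$, and $g$ is bounded on the compact set $K_\phi$. For condition 5, replace $A$ by
\[
\max\Bigl(A,\ \sup_{K_\phi}F\Bigr),
\]
so that above this level the tangent spaces of $L_2'$ are those of $L_2$.

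For the isomorphism, I would use the standard trick of absorbing the isotopy into the Hamiltonian. Let $G_s$ be the compactly supported generating Hamiltonian of $\phi$. Choose admissible data $(H',J_t)$ for the pair $(L_1,L_2')$. Define $H$ as the concatenated Hamiltonian whose time-one map is $\phi_1^{-1}\circ\phi_{H'}^1$, reparametrized smoothly in $t$; it is again compactly supported. Then
\[
L_1\cap(\phi_H^1)^{-1}(L_2)=L_1\cap(\phi_{H'}^1)^{-1}(L_2').
\]

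There is also a naturality change of variables. Write $\psi_t$ for the portion of the isotopy inserted along the $t$-direction. The map $u\mapsto \psi_t\circ u$ carries solutions of \eqref{CRF} for $(L_1,L_2,H,J_t)$ to solutions for $(L_1,L_2',H',\psi_{t*}J_t)$. Because $\psi_t$ is compactly supported, the pushed-forward almost complex structures still equal $j$ outside a compact set. The data are therefore admissible for the new pair, and the chain complexes are canonically identical. Combined with the preceding proposition on independence of $(H,J_t)$, this gives $HF(L_1,L_2)\cong HF(L_1,L_2')$.

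The main obstacle I expect is the bookkeeping in this last step. One has to make sure the concatenated Hamiltonian and the conjugated family of complex structures satisfy the admissibility requirements literally: $H$ compactly supported, $J_t=j$ near infinity, and transversality, which may require a further small generic perturbation. If that becomes awkward, the fallback is a continuation-map argument with moving boundary condition $u(s,1)\in\phi_{\beta(s)}(L_2)$. That argument needs the compactness of Proposition \ref{boundary squared is zero}, which still applies since the moving boundary is fixed outside $K_\phi$. It also needs an energy bound, which holds because the action depends on $s$ only through the compactly supported isotopy.
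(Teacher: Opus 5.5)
Your proof is correct. Your check of the five suitability conditions matches the paper's, which only sketches it. You fill in the details: $L_2'\setminus K_\phi=L_2\setminus K_\phi$, the flux argument, and the $\pi_1$ condition, which the paper leaves implicit.

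For the isomorphism your main route differs from the paper's. The paper uses what you list as your fallback: a continuation map with moving boundary condition $u(s,1)\in\phi_{\beta(s)}(L_2)$, interpolating between admissible data for the two pairs. It justifies compactness only by saying the argument of Proposition \ref{boundary squared is zero} carries over, and it asserts without detail that the resulting map is a quasi-isomorphism.

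You instead identify $CF(L_1,L_2,H,J)$ with $CF(L_1,L_2',H',J')$ at the chain level via $u\mapsto\psi_t\circ u$. Here $\psi_t:=\phi^t_{H'}\circ(\phi^t_H)^{-1}$ is a compactly supported Hamiltonian isotopy from the identity to $\phi_1$. One notational fix: with $J'_t$ admissible for $(L_1,L_2')$, the structure used for $(L_1,L_2)$ should be the pullback $J_t:=\psi_t^{*}J'_t$. Your text writes $J_t$ on both sides.

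The conjugation is a diffeomorphism of the relevant spaces of maps. It preserves energy, asymptotic chords, and the linearized operators, so transversality transfers automatically and no further generic perturbation is needed. Since $J_t=j$ and $H=0$ outside a compact set, $(H,J)$ is admissible for $(L_1,L_2)$.

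What your route buys is that no new moduli problem appears. The only analytic input is the already-proved independence of $HF$ from $(H,J_t)$ for a fixed suitable pair. The paper's route instead has three extra burdens:
\begin{itemize}
\item redoing the energy bound, which picks up a term from the $s$-dependence of the boundary;
\item redoing the Hopf-lemma maximum principle for moving boundary conditions;
\item building a reverse continuation map and homotopies to get an isomorphism.
\end{itemize}
The paper's argument is the more standard continuation-map tool. Yours replaces it by a formal change of variables, which is cleaner in this setting.
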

\begin{proof}
    The symplectic action one form is exact on $\mathcal{P}(L_1, \phi_1(L_2))$ because it is exact on $\mathcal{P}(L_1, L_2)$, and $\phi$ is a Hamiltonian isotopy. The intersection between $L_1$ and $\phi_1(L_2)$ remains compact, and the last two assumptions in Definition \ref{Suitable pair Defintion} hold since $\phi$ has compact support.  
    Consider a smooth function $\beta\colon \mathbb{R}\to [0,1]$ so that $\beta(s) = 0$ for $s<<0$ and $\beta(s) = 1$ for $s>>0$. Choose an admissible pair $(H_0, J_t^0)$ for $(L_1, L_2)$ and an admissible pair $(H_1, J_t^1)$ for $(L_1, \phi(L_2))$. Interpolate between the two pairs as in the previous proposition, and consider finite energy solutions of 
  \begin{equation*}
      u_s + J_t^s(u)(u_t-X_{H_s}(u)) = 0,
  \end{equation*}
  $u\colon \mathbb{R}\times I\to X$,
  with boundary conditions
  \begin{equation*}
      u(s,0)\in L_1 \hspace{2mm} u(s,1)\in \phi_{\beta(s)}(L_2)\hspace{2mm} \forall s\in \mathbb{R}.
  \end{equation*}
  We can compactify the solutions of this equation using the same idea as in Proposition \ref{boundary squared is zero}. For a generic interpolation, the moduli spaces are smooth manifolds and we can count the zero dimensional parts of these manifolds as before. This gives us a quasi-isomorphism from $CF(L_1, L_2, H_0, J_t^0)$ to $CF(L_1, \phi^1(L_2), H_1, J_t^1)$. 
\end{proof}
\section{Vertical Lines and Clean Intersection}
\label{sec: vert lines and clean}

In this section, we consider four vertical lines 
\begin{equation*}
    \delta_i = \{x = \alpha_i\}\subset \mathbb{C}
\end{equation*}
for $1\leq i\leq 4$ as our periodic curves. First, we find when the Lagrangian pair $(L_{24}, L_{13}^\theta)$ corresponding to the quadruple $(\delta_1, \delta_2, \delta_3, \delta_4)$ is suitable. Then we compute the Floer homology $HF(L_{24}, L_{13}^\theta)$.
\begin{lem}
\label{Vertical lines are suitable}
    If $(\delta_1, \delta_2, \delta_3, \delta_4)$ are vertical lines with $\delta_i = \{x = \alpha_i\}$, then $(L_{24}, L_{13}^\theta)$ is a suitable pair iff
    \begin{equation}
    \label{line being suitable equation}
        t(\alpha_1-\alpha_2) + (1-t)(\alpha_3-\alpha_4)=0
    \end{equation}
\end{lem}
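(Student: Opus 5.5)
We check the five conditions of Definition \ref{Suitable pair Defintion} one at a time. Conditions (1), (2) and (4) hold for any choice of the $\alpha_i$, so the equivalence will come entirely from condition (3).

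Condition (1) is Lemma \ref{Inclusion map induces iso on fundamental group}, since the lines $\delta_i$ are smooth periodic curves. Condition (2) is Lemma \ref{compactness of intersections}. Condition (4) follows from the formulas for $g$ on each Lagrangian. On $L_{24}$ we have $g\equiv t\alpha_2+(1-t)\alpha_4$, a constant. On $L_{13}^\theta$, equation \eqref{g does not change under phi} gives $g\circ\varphi_\theta=g$, so $g\equiv t\alpha_1+(1-t)\alpha_3$ there. Condition (3), exactness of the action one form, is Proposition \ref{The area Lagrangian proposition}. For $\delta_i$ parametrized by $\gamma_i(s)=\alpha_i+is$ we get $\int_{\tilde\delta_i}x\,dy=\alpha_i$, so \eqref{The area Lagrangian equation} becomes exactly \eqref{line being suitable equation}. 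This already proves necessity. It remains to check condition (5) unconditionally, and this is the step I expect to be the main obstacle.

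For condition (5) I will compute explicitly. Set $c=z-w$ and $m=tz+(1-t)w$, so that $F=t(1-t)|c|^2/2$ and $dF(v)=t(1-t)\operatorname{Re}(\bar c\,\dot c)$, where $\dot c$ is the $c$-component of $v$. On $L_{24}$ we have $z=\alpha_2+iy_1$ and $w=\alpha_4+iy_2$, with tangent vectors $v=(i a, i b)$ for $a,b\in\mathbb{R}$. Then $c=(\alpha_2-\alpha_4)+i(y_1-y_2)$ and $\dot c=i(a-b)$, so
\begin{equation*}
dF(v)=t(1-t)(y_1-y_2)(a-b),
\end{equation*}
while $jv=(-a,-b)$ gives
\begin{equation*}
dF(jv)=-t(1-t)(\alpha_2-\alpha_4)(a-b).
\end{equation*}
Take $A$ large enough that $F>A$ forces $y_1\ne y_2$; this is possible because $|\operatorname{Re} c|=|\alpha_2-\alpha_4|$ is fixed. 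Then $dF(v)=0$ forces $a=b$, and hence $dF(jv)=0$.

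For $L_{13}^\theta$, I will use that $\varphi_\theta$ is the flow of $F$, so it preserves $F$ and $\ker dF$. It also commutes with $j$, because in the coordinates $(m,c)$ it fixes $m$ and rotates $c$ by $e^{i\theta}$, which is complex linear. Therefore condition (5) for $L_{13}^\theta$ reduces to condition (5) for $L_{13}$, which is the same computation as above with $\alpha_1,\alpha_3$ in place of $\alpha_2,\alpha_4$. Taking $A$ to be the maximum of the two resulting constants finishes sufficiency.
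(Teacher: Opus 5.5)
Your proposal is correct and follows essentially the same route as the paper: conditions (1), (2), (4) and (5) hold for any $\alpha_i$, and the equivalence comes from condition (3) via Proposition \ref{The area Lagrangian proposition} with $\int_{\tilde\delta_i}x\,dy=\alpha_i$. Condition (5) is checked by the same tangent-vector computation on $L_{24}$ and $L_{13}$, then transported to $L_{13}^\theta$ because $\varphi_\theta$ preserves $F$ and is complex linear; your explicit formulas for $dF(v)$, $dF(jv)$ and for the constant value of $g$ on each Lagrangian are simply a cleaner write-up of the paper's argument.
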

\begin{proof}
    The first two assumptions of Definition \ref{Suitable pair Defintion} follow from Lemma \ref{compactness of intersections} and Lemma \ref{Inclusion map induces iso on fundamental group}. The relation in equation \eqref{The area Lagrangian equation} is equivalent to equation \eqref{line being suitable equation} for the vertical lines $\delta_i$. Hence, the symplectic action one form on $\mathcal{P}(L_{24}, L_{13}^\theta)$ is exact iff equation \eqref{line being suitable equation} holds by Proposition \ref{The area Lagrangian proposition}. Take a positive number $N$ such that $\alpha_i\in [-N, N]$ for $1\leq i\leq 4$, then $g$ is bounded by $N$ on $L_{24}$ and $L_{13}$. It follows from equation \eqref{g does not change under phi} that $g$ is also bounded by $N$ on $L_{13}^\theta$. It remains to show the last assumption in Definition \ref{Suitable pair Defintion} holds. First, we prove this holds for $L_{24}$. Let $d = \vert\alpha_2-\alpha_4\vert$, and suppose $x\in L_{24}$ satisfies 
    \begin{equation}
    \label{distance greater than sth}
        F(x)>\frac{t(1-t)d^2}{2}.
    \end{equation}
    Take a point $(x,v)\in TL_{24}\cap \ker(dF)$, choose a point
    \begin{equation*}
        (z,w)\in\pi^{-1}(x)\subset\delta_2\times\delta_4,        
    \end{equation*}
    and $\tilde{v}=(v_1, v_2)\in T_{(z,w)}(\delta_2\times\delta_4)$ with $d\pi(\tilde{v})=v$. We get
    \begin{equation*}
        0 = dF(v) = v_1\cdot(z-w) -  v_2\cdot(z-w).
    \end{equation*}
    We can write $\tilde{v}=(r\sqrt{-1}, s\sqrt{-1})$ for two real numbers $r,s$ because $\delta_i$'s are vertical lines. Hence, we have
    \begin{equation*}
        0 = (r-s)\operatorname{Im}(z-w).
    \end{equation*}
    Observe that $\operatorname{Im}(z-w)\neq 0$ by equation \eqref{distance greater than sth}, so we must have $r=s$. Thus
    \begin{equation*}
        dF(jv) = v_1\cdot(z-w)-v_2\cdot(z-w) = (s-r)\operatorname{Re}(z-w) = 0.
    \end{equation*}
    A similar argument shows the last assumption of Definition \ref{Suitable pair Defintion} holds for $L_{13}$ with $d=\vert\alpha_1-\alpha_3\vert$. Since $L_{13}^\theta=\varphi_\theta(L_{13})$, $dF$ is invariant under $\varphi_\theta$, and $\varphi_\theta$ is holomorphic, we conclude that $L_{13}^\theta$ has the last condition as well, so the pair $(L_{24}, L_{13}^\theta)$ satisfies equation \eqref{the Hopf Lemma Ess Assumption} with 
    \begin{equation*}
        A = \frac{t(1-t)\max((\alpha_2-\alpha_4)^2, (\alpha_1-\alpha_3)^2)}{2}.
    \end{equation*}
\end{proof}
Now that we know $HF(L_{24}, L_{13}^\theta)$ is suitable if equation \eqref{line being suitable equation} holds, we are going to compute this Floer homology using the idea of clean intersection.

\begin{defi}
    We say two Lagrangians $L_1$ and $L_2$ have clean intersection if their intersection is a compact manifold $C$ in $X$, and we have
    \begin{equation*}
        T_xC = T_xL_1\cap T_xL_2
    \end{equation*}
    for every $x\in C$. This is a generalization of transverse intersection.
\end{defi} 

\begin{lem}
\label{Clean-Int-HF}
    Suppose $(L_1, L_2)$ is a suitable pair, and they intersect cleanly with $C=L_1\cap L_2$ a closed submanifold of $X$. Then
    \begin{equation*}
        HF(L_1, L_2)\cong H^*(C;\mathbb{F}_2).
    \end{equation*}
\end{lem}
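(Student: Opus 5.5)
The plan is to reduce to the classical clean-intersection computation of Pozniak, which identifies Floer homology of cleanly intersecting Lagrangians with the Morse homology of the intersection. The non-compactness of $X$ and of the $L_i$ is irrelevant once strips are confined to a compact set, and that confinement comes from the suitability argument of Proposition \ref{boundary squared is zero}.

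\textbf{Step 1: a small Hamiltonian perturbation.} Choose a Morse function $f$ on the compact manifold $C$. By the Weinstein neighborhood theorem, a neighborhood of $C$ in $L_2$ (equivalently, a neighborhood of $L_1$ near $C$) is modelled on a neighborhood of $C$ in $T^*L_1$, with $L_1$ the zero section. The clean condition $T_xC=T_xL_1\cap T_xL_2$ says that near $C$ the Lagrangian $L_2$ is modelled on the conormal bundle of $C$. Extend $f$ to a function on this neighborhood, multiply by $\epsilon$, and cut it off. This gives a compactly supported Hamiltonian $H_\epsilon$ supported in a small compact neighborhood of $C$. For $\epsilon$ small:
\begin{itemize}
\item the chords of $H_\epsilon$ from $L_1$ to $L_2$ correspond exactly to $\operatorname{Crit}(f)$, and they are transverse;
\item no new intersections are created away from $C$, because $L_1\cap L_2=C$ and $H_\epsilon$ has compact support near $C$.
\end{itemize}
By Corollary \ref{Invariance under compactly supported perturbations}, the pair stays suitable and $HF$ is unchanged. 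So $CF(L_1,L_2,H_\epsilon,J_t)$ has one generator per critical point of $f$.

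\textbf{Step 2: energy and compactness.} Exactness of the action (assumption 3 of Definition \ref{Suitable pair Defintion}) lets us normalize the primitive $\mathcal{A}$ so that it is constant on $C$. The action differences between the perturbed chords are then $O(\epsilon)$, so by \eqref{Action-Energy} every strip has energy $O(\epsilon)$.

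Take $J_t$ admissible, equal to $j$ outside a compact set $K$. The maximum-principle argument of Proposition \ref{boundary squared is zero}, using $g^2$ and $F$ with the Hopf-lemma condition, confines every strip to a fixed compact set independent of $\epsilon$. On that compact set the usual monotonicity lemma applies: small energy forces each strip to lie in a small tubular neighborhood of $C$. Otherwise the strip would have to cross an annulus of fixed width, which costs a definite amount of energy.

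\textbf{Step 3: the local comparison (main obstacle).} Inside the Weinstein neighborhood, with $J_t$ chosen compatible with the cotangent model, we apply the Pozniak/Floer adiabatic argument. As $\epsilon\to0$, index-one Floer strips between critical points of $f$ correspond bijectively to negative gradient lines of $f$ on $C$ for a generic metric. This step is the hard core: it is Floer's theorem $HF(L,\phi^{\epsilon f}(L))\cong H^*(L)$ transplanted to the clean setting. I would cite Pozniak's thesis rather than reprove it, noting only that his argument is local near $C$.

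Two points make the local argument legitimate here. First, Step 2 supplies the a priori localization that replaces compactness of the ambient manifold. Second, $\pi_2(X,L_i)=0$ by Lemma \ref{Inclusion map induces iso on fundamental group}, which rules out bubbling and orientation issues; over $\mathbb{F}_2$ no sign choices are needed.

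\textbf{Conclusion.} The Floer complex for $(H_\epsilon,J_t)$ is identified with the Morse complex of $f$ on $C$. Hence $HF(L_1,L_2)\cong H^{Morse}_*(f;\mathbb{F}_2)\cong H^*(C;\mathbb{F}_2)$, where the last step uses Poincaré duality over $\mathbb{F}_2$ and ignores grading.
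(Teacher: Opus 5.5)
Your argument is correct, modulo one hypothesis discussed below that the paper also uses tacitly. It is organized differently from the paper's proof.

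\textbf{The paper's route.} The paper quotes Pozniak's spectral sequence from $H^*(C;\mathbb{F}_2)$ to $HF(L_1,L_2)$ and shows that it collapses. A strip contributing to a higher differential would join two points of $C$. Since $C$ is a critical submanifold of the exact action functional, the energy $\mathcal{A}(y)-\mathcal{A}(x)$ of such a strip vanishes.

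\textbf{Your route.} You perturb by $\epsilon f$ with $f$ Morse on $C$, and use the same constancy of $\mathcal{A}$ on $C$ to bound every energy by $O(\epsilon)$. You then combine the $\epsilon$-independent confinement from the maximum principle of Proposition~\ref{boundary squared is zero} with monotonicity, which traps all strips in a Weinstein neighbourhood of $C$. Only the local Floer--Morse comparison is cited there.

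\textbf{Comparison.} The underlying mechanism is the same: the action filtration has a single level, so the global complex is the local one. Your route proves this collapse directly rather than appealing to what the higher differentials count. It also makes explicit where the non-compactness of $X$ and of the $L_i$ enters: the uniform a priori confinement is exactly what the monotonicity step needs. The paper's route is shorter and needs no adapted $J_t$, no Morse function, and no monotonicity estimate.

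\textbf{The tacit hypothesis.} Both arguments use that $C$ is connected. The functional $\mathcal{A}$ is only locally constant on the critical set. Without connectedness, your normalization fails, as does the paper's equality $\mathcal{A}(x)=\mathcal{A}(y)$, and so does the lemma itself. For example, take the Weinstein model near the clean circle of Lemma~\ref{Vertical lines meet cleanly}, in which $K_{13}^\theta$ is the conormal of that circle. A compactly supported Hamiltonian S-shaped wiggle of the conormal direction (with zero net signed area) turns the one clean circle into three. Then $H^*(C;\mathbb{F}_2)$ has rank $6$, while $HF$ is still $\mathbb{F}_2^2$ by Corollary~\ref{Invariance under compactly supported perturbations}. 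Since $C$ is a single circle in the application, you should simply add connectedness to the hypotheses.
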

\begin{proof}
    It follows from \cite[Theorem 3.4.11]{6} that there is a spectral sequence from $H^*(C)$ to $HF(L_1, L_2)$. The higher differentials in this spectral sequence involve counting holomorphic strips with two ends on $C$, but there are no holomorphic strips with two ends on $C$ because the symplectic action one form is exact in our case; in particular, suppose $u$ is a non-constant holomorphic strip with two ends $x,y\in L_1\cap L_2$. Then we must have $\mathcal{A}(x)=\mathcal{A}(y)$ because $C$ is a closed critical submanifold of the symplectic action, and we get
    \begin{equation*}
        E(u)=\mathcal{A}(y)-\mathcal{A}(x) = 0
    \end{equation*}
    which is a contradiction;
    we refer the unfamiliar reader to \cite[Chapter 10]{ CleanFloerHomology} for more details. 
\end{proof}
\begin{lem}
\label{Vertical lines meet cleanly}
    Suppose $\delta_i = \{x=\alpha_i\}$ is a vertical line for each $1\leq i\leq 4$, and we have the relation in equation \eqref{line being suitable equation} among $\alpha_i's$. Then $L_{24}$ and $L_{13}^\theta$ intersect cleanly along a circle.
\end{lem}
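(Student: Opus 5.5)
The plan is to compute the intersection $L_{24}\cap L_{13}^\theta$ explicitly in the universal cover $\mathbb{C}^2$, using the formula \eqref{the flow equation} for $\varphi_\theta$. I will then check the clean intersection condition by running the same linear computation on tangent vectors.

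\emph{Describing the intersection.} A point of the lifted intersection is a pair $(z,w)\in\delta_1\times\delta_3$ with $\varphi_\theta(z,w)\in\delta_2\times\delta_4$. Write $c=tz+(1-t)w$ and $d=z-w$, so that
\begin{equation*}
\tilde z=c+(1-t)e^{i\theta}d,\qquad \tilde w=c-te^{i\theta}d.
\end{equation*}
The constraints $\operatorname{Re}z=\alpha_1$ and $\operatorname{Re}w=\alpha_3$ give
\begin{equation*}
\operatorname{Re}c=t\alpha_1+(1-t)\alpha_3,\qquad \operatorname{Re}d=\alpha_1-\alpha_3.
\end{equation*}
The conditions $\operatorname{Re}\tilde z=\alpha_2$ and $\operatorname{Re}\tilde w=\alpha_4$ are equivalent to two combinations of them.
\begin{itemize}
\item Taking $t$ times the first plus $(1-t)$ times the second gives $\operatorname{Re}c=t\alpha_2+(1-t)\alpha_4$. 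Given the previous expression for $\operatorname{Re}c$, this holds automatically precisely because of \eqref{line being suitable equation}.
\item Taking their difference gives $\operatorname{Re}(e^{i\theta}d)=\alpha_2-\alpha_4$, that is, $\cos\theta(\alpha_1-\alpha_3)-\sin\theta\operatorname{Im}d=\alpha_2-\alpha_4$. Since $\theta\in(0,\pi)$, we have $\sin\theta\neq 0$, so this fixes $\operatorname{Im}d$ uniquely.
\end{itemize}
Hence the lifted intersection is the real line $\{(z_0+iu,w_0+iu):u\in\mathbb{R}\}$, with $\operatorname{Im}c=u$ free. This line is invariant under $T$, and its image under $\pi$ is a single embedded circle $C$. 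The circle is compact, consistent with Lemma \ref{compactness of intersections}.

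\emph{Cleanness.} At a point of $C$, we have $T L_{24}=i\mathbb{R}\times i\mathbb{R}$ and $T L_{13}^\theta=d\varphi_\theta(i\mathbb{R}\times i\mathbb{R})$, where $d\varphi_\theta$ is the linear part of $\varphi_\theta$. Take a vector $(ir,is)$ tangent to $L_{13}$. Its image under $d\varphi_\theta$ lies in $i\mathbb{R}\times i\mathbb{R}$ exactly when the same two conditions as above hold with all $\alpha_i$ replaced by $0$. The first condition is automatic. The second reads $\operatorname{Re}(e^{i\theta}i(r-s))=-\sin\theta\,(r-s)=0$, which forces $r=s$. So $T L_{24}\cap T L_{13}^\theta$ is spanned by the vector $(i,i)$, and $d\varphi_\theta$ maps $(i,i)$ to itself. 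This vector is exactly the tangent direction of $C$, so $T_xC=T_xL_{24}\cap T_xL_{13}^\theta$ for every $x\in C$.

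The only delicate point is the dimension count: $\sin\theta\neq 0$ is what keeps the tangent intersection one-dimensional rather than two-dimensional. Beyond that, the argument is bookkeeping with the linear formula for $\varphi_\theta$.
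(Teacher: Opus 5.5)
Your proof is correct and follows essentially the same route as the paper. You lift to $\mathbb{C}^2$ and work in the coordinates $c=tz+(1-t)w$, $d=z-w$, which are the paper's change of variables $\psi$, in which $\varphi_\theta$ becomes $(c,d)\mapsto(c,e^{i\theta}d)$; the relation among the $\alpha_i$ makes the $\operatorname{Re}c$ condition automatic, and $\sin\theta\neq 0$ both fixes $\operatorname{Im}d$ and cuts $TL_{24}\cap TL_{13}^\theta$ down to the span of $(i,i)$. The only cosmetic difference is that you use the $(c,d)$ coordinates already to locate the intersection line, whereas the paper solves for $r_3$ in terms of $r_1$ in the original coordinates and introduces $\psi$ only for the tangent-space check.
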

\begin{proof}
    Since $\pi\colon\mathbb{C}^2\to X$ is a local diffeomorphism, it suffices to prove $\delta_2\times\delta_4$ meets $\varphi_\theta(\delta_1\times\delta_3)$ cleanly in a line. Then the quotient of this line by translation is the desired circle of intersection between $L_{24}$ and $L_{13}^\theta$ in $X$. 
    Take a pair $(z,w) = (\alpha_1, r_1,\alpha_3, r_3)\in \delta_1\times\delta_3$, and assume $\varphi_\theta(z,w)\in \delta_2\times\delta_4$. Then we get
    \begin{equation*}
        \operatorname{Re}(e^{i\theta}(z-w)) = \cos\theta(\alpha_1-\alpha_3)+\sin\theta(r_3-r_1)=\alpha_2-\alpha_4,
    \end{equation*}
    so we have
    \begin{equation*}
        r_3 = r_1 + \cot\theta(\alpha_3-\alpha_1)+\frac{(\alpha_2-\alpha_4)}{\sin\theta}.
    \end{equation*}
    Note that since we have the relation in equation \eqref{line being suitable equation}, we get
    \begin{equation*}
        \varphi_\theta\big((\alpha_1,r,\alpha_3, r+\cot\theta(\alpha_3-\alpha_1)+\frac{(\alpha_2-\alpha_4)}{\sin\theta})\big)\in \delta_2\times\delta_4.
    \end{equation*}
    for every $r\in\mathbb{R}$; this is the line of intersection between $\varphi_\theta(\delta_1\times\delta_3)$ and $\delta_2\times\delta_4$. See the following figure for the case of a rectangle inscribed in four vertical lines.
    Denote this line by $l$; it remains to show
    \begin{equation}
    \label{equality of tangent bundle intersections}
        T_{(z,w)}l = T_{(z,w)}(\delta_2\times\delta_4)\cap T_{(z,w)}(\varphi_\theta(\delta_1\times\delta_3))
    \end{equation}
    for every $(z,w)\in l$.
\begin{figure}[h]
\begin{center}
\begin{tikzpicture}[scale=1]

\foreach \x in {0,1,2,3}{
    \draw[blue, thick] (\x,-1.2) -- (\x,3.2);
}

\node[below] at (0,-1.2) {$\delta_1$};
\node[below] at (1,-1.2) {$\delta_2$};
\node[below] at (2,-1.2) {$\delta_4$};
\node[below] at (3,-1.2) {$\delta_3$};

\coordinate (A) at (0,0);
\coordinate (B) at (1,{2-2*sqrt(2)});
\coordinate (C) at (3,{3-sqrt(2)});
\coordinate (D) at (2,{1+sqrt(2)});

\draw[thick] (A) -- (B) -- (C) -- (D) -- cycle;

\draw[red, thick] (A) -- (C);
\draw[red, thick] (B) -- (D);

\fill (A) circle (1pt);
\fill (B) circle (1pt);
\fill (C) circle (1pt);
\fill (D) circle (1pt);

\end{tikzpicture}
        \end{center}
        \caption{$t=\frac{1}{2},\theta=\frac{\pi}{4}$}
        \label{fig:placeholder}
\end{figure}
    
    First, we do a change of coordinates $\psi\colon \mathbb{C}^2\to\mathbb{C}^2$ by
    \begin{equation*}
        \psi(z,w) = (tz+(1-t)w, z-w),
    \end{equation*}
    which makes the computations more tractable. Then we get
    \begin{equation*}
        R_\theta\coloneqq\psi\circ\varphi_\theta\circ\psi^{-1}(z,w) = (z, e^{i\theta}w).
    \end{equation*}
    Consider the coordinates $(x_1, y_1, x_2, y_2)$ on $\mathbb{C}^2$. We have
    \begin{align*}
        d\psi(T_{(z,w)}(\delta_2\times\delta_4))=\langle t\partial y_1+\partial y_2, (1-t) \partial y_1-\partial y_2\rangle = \langle\partial y_1, \partial y_2\rangle 
    \end{align*}
    for every $(z,w)\in \delta_2\times\delta_4$. Similarly, we compute
    \begin{equation*}
        dR_\theta\circ d\psi(T_{(z,w)}(\delta_1\times\delta_3))=\langle\partial y_1, \cos\theta\partial y_2-\sin\theta\partial x_2\rangle
    \end{equation*}
    for every $(z,w)\in \delta_1\times\delta_3$. Therefore,
    \begin{equation*}
        d\psi\big(T_{(z,w)}(\delta_2\times\delta_4)\cap T_{(z,w)}(\varphi_\theta(\delta_1\times\delta_3))\big) = \langle\partial y_1\rangle
    \end{equation*}
    because we assumed $\theta\in(0,\pi)$. We conclude that equation \eqref{equality of tangent bundle intersections} holds since $d\psi(T_{(z,w)}l)=\langle\partial y_1\rangle$.
\end{proof}
\begin{prop}
\label{HF is the circle homology}
    Suppose $(\delta_1,\delta_2,\delta_3,\delta_4)$ are a quadruple of vertical lines satisfying equation \eqref{line being suitable equation}. Then
    \begin{equation*}
        HF(L_{24}, L_{13}^\theta)\cong \mathbb{F}_2^2.
    \end{equation*}
\end{prop}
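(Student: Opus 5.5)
The plan is to combine the three preceding results directly. By Lemma \ref{Vertical lines are suitable}, the relation \eqref{line being suitable equation} makes $(L_{24}, L_{13}^\theta)$ a suitable pair, so $HF(L_{24}, L_{13}^\theta)$ is defined by Proposition \ref{boundary squared is zero} and is independent of the auxiliary data. By Lemma \ref{Vertical lines meet cleanly}, $L_{24}$ and $L_{13}^\theta$ intersect cleanly, and the intersection $C$ is the image in $X$ of the line $l\subset\mathbb{C}^2$ under $\pi$. Lemma \ref{Clean-Int-HF} then gives $HF(L_{24}, L_{13}^\theta)\cong H^*(C;\mathbb{F}_2)$.

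It remains to identify $C$ as a circle and compute its cohomology. The line $l$ is parametrized by
\[
r\mapsto \bigl(\alpha_1+ir,\ \alpha_3+i(r+c)\bigr),\qquad c=\cot\theta(\alpha_3-\alpha_1)+\frac{\alpha_2-\alpha_4}{\sin\theta},
\]
pushed forward by $\varphi_\theta$. Since $\varphi_\theta$ commutes with $T$ (both $F$ and $\omega$ are $T$-invariant, and formula \eqref{the flow equation} is affine with translation part $tz+(1-t)w$), translating $r$ by $1$ corresponds to applying $T$. Hence $l$ is $T$-invariant, $T$ acts freely on it by a unit shift, and $C=\pi(l)=l/\mathbb{Z}\cong S^1$. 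This circle is compact, as Lemma \ref{compactness of intersections} also guarantees, so $H^*(S^1;\mathbb{F}_2)\cong\mathbb{F}_2\oplus\mathbb{F}_2$, giving $\mathbb{F}_2^2$.

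The only step needing care is the application of Lemma \ref{Clean-Int-HF}. Its proof invokes the clean-intersection (Pozniak/Frauenfelder) spectral sequence, which requires compactness of the relevant holomorphic strips. Here that compactness comes from the maximum-principle argument of Proposition \ref{boundary squared is zero}, using the suitability conditions 4 and 5. The collapse of the spectral sequence comes from exactness of the action form together with the connectedness of $C$: the action is constant on the connected critical manifold $C$, so every strip joining points of $C$ has zero energy. I would note explicitly that $C$ is connected, which makes the action constant on it. With that, the isomorphism follows.
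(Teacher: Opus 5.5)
Your proposal is correct and follows the paper's proof, which combines Lemma \ref{Vertical lines meet cleanly} (clean intersection along a circle) with Lemma \ref{Clean-Int-HF} to get $HF(L_{24},L_{13}^\theta)\cong H^*(S^1;\mathbb{F}_2)\cong\mathbb{F}_2^2$. Your extra checks make explicit two points the paper leaves implicit: that $\varphi_\theta$ commutes with $T$, so $C=\pi(l)=l/\mathbb{Z}\cong S^1$, and that $C$ is connected, so the action is constant on it, which is exactly what the collapse argument in Lemma \ref{Clean-Int-HF} uses.
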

\begin{proof}
    By Lemma \ref{Vertical lines meet cleanly}, we know $L_{24}\cap L_{13}^\theta$ is a circle and this intersection is clean. Thus we get
    \begin{equation*}
        HF(L_{24}, L_{13}^\theta)\cong H^*(S^1)
    \end{equation*}
    by Lemma \ref{Clean-Int-HF}.
\end{proof}
\section{Periodic Curves and Suitable Lagrangians}
\label{sec: periodic curves and suitable lags}

Let $(\gamma_1,\gamma_2,\gamma_3,\gamma_4)$ be a quadruple of smooth periodic curves, and construct the Lagrangian pair $(L_{24}, L_{13}^\theta)$ in $(X,\omega)$. We know the inscriptions of type $(t,\theta)$ trapezoids are in one-to-one correspondence with the intersection $L_{24}\cap L_{13}^\theta$. We want to show this set is non-empty if \eqref{The area Lagrangian equation} holds. The idea is to use Floer homology; however, the pair $(L_{24}, L_{13}^\theta)$ is not suitable for every choice of $\gamma_i$'s, and we cannot define Floer homology in this case. We will fix this issue by perturbing this pair to another pair $(\Bar{L}_{24}, \Bar{L}_{13}^\theta)$ such that
\begin{equation*}
    L_{24}\cap L_{13}^\theta = \Bar{L}_{24}\cap\Bar{L}_{13}^\theta,
\end{equation*}
and $(\Bar{L}_{24}, \Bar{L}_{13}^\theta)$ is suitable.
\begin{lem}
\label{Bounded Hamiltonian perturbations}
    Suppose $(\gamma_1,\gamma_2,\gamma_3,\gamma_4)$ is a quadruple of smooth periodic curves inside the strip $[-N,N]\times\mathbb{R}$ in the complex plane.
    Then there exist two time-dependent Hamiltonian $G_1, G_2\colon X\times I\to \mathbb{R}$ and a quadruple of vertical lines $(\delta_1,\delta_2, \delta_3, \delta_4)$ in $[-N,N]\times\mathbb{R}$ such that
    \begin{align*}
        &\phi_{G_1}^1(K_{24}) = L_{24},\\
        & \phi_{G_2}^1(K_{13}^\theta) = L_{13}^\theta,
    \end{align*}
    where $(K_{24}, K_{13}^\theta)$ and $(L_{24}, L_{13}^\theta)$ are the Lagrangian pairs associated to $\delta_i$'s and $\gamma_i$'s respectively as in Section \ref{sec: symp setting}. Moreover, we can choose $G_i$'s so that they are supported in the set $g^{-1}([-2N, 2N])$,\break and $\Vert G_i\Vert_{C^1(X\times [0,1])}<\infty$ for each $i$.
\end{lem}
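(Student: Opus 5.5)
We will build $G_1$ and $G_2$ from isotopies of the individual curves, lifted to product isotopies in $\mathbb{C}^2$ and then pushed down to $X$. Take $\delta_i=\{x=\alpha_i\}$ with $\alpha_i\in[-N,N]$; the natural choice is to let $\alpha_i$ be the average value of $\operatorname{Re}\gamma_i$ over one period, although any choice in $[-N,N]$ will do.

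For each $i$, connect $\delta_i$ to $\gamma_i$ by a smooth family of periodic embedded curves $\gamma_i^\tau$, $\tau\in[0,1]$, all lying in $[-N,N]\times\mathbb{R}$. One construction is to choose a periodic isotopy of the cylinder $\mathbb{R}\times S^1$ supported in $[-N-1,N+1]\times S^1$ that carries $\tilde\delta_i$ onto $\tilde\gamma_i$. Such an isotopy exists because both curves are embedded generators of $H_1$ of the cylinder, and for a graph-like curve one can simply take the straight-line homotopy. We then lift the isotopy to $\mathbb{C}$; it commutes with translation by $\sqrt{-1}$.

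The product family $\gamma_2^\tau\times\gamma_4^\tau$ is a $T$-invariant isotopy of Lagrangians in $(\mathbb{C}^2,\omega)$, so it descends to a Lagrangian isotopy $K^\tau$ in $X$. A Lagrangian isotopy is Hamiltonian exactly when its flux, the variation of the area class $\lambda|_{K^\tau}\in H^1(K^\tau)\cong\mathbb{R}$, vanishes. By the computation in Proposition \ref{The area Lagrangian proposition}, this class is $t\int_{\tilde\gamma_2^\tau}x\,dy+(1-t)\int_{\tilde\gamma_4^\tau}x\,dy$. We cannot force the flux of each curve to vanish separately, since the curves are given. Instead we choose $\alpha_2$ and $\alpha_4$ so that the endpoint values agree: the function $t\alpha_2+(1-t)\alpha_4$ must equal $t\int_{\tilde\gamma_2}x\,dy+(1-t)\int_{\tilde\gamma_4}x\,dy$. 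This is possible within $[-N,N]$, because each $\int_{\tilde\gamma_i}x\,dy$ is an average of $x$ with respect to $dy$, weighted against a closed curve winding once, and so lies in $[-N,N]$ after the usual subtraction of a linear exact term.

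Along the path itself we reparametrize so that the flux vanishes for all $\tau$. Concretely, we first move $\gamma_2$ while compensating with a horizontal translation of the vertical line $\delta_4^\tau$, so that the flux stays constant, and then continue. With zero flux at every time, the primitive of $\lambda|_{K^\tau}$ paired with the velocity field defines a function $G_1(\cdot,\tau)$ on $K^\tau$. We extend it to $X$ by a cutoff, in the standard way that converts an exact Lagrangian isotopy into an ambient Hamiltonian isotopy.

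The support condition works as follows. Every point of $K^\tau$ has $|\operatorname{Re} z|,|\operatorname{Re} w|\le N$, so $|g|\le N$ on $K^\tau$. We extend $G_1$ using a tubular neighborhood and multiply by a cutoff in $g$ that equals $1$ on $g^{-1}[-N,N]$ and is supported in $g^{-1}[-2N,2N]$. The identity $\phi^1_{G_1}(K_{24})=L_{24}$ only needs $G_1$ correct near $K^\tau$, so the cutoff does no harm.

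For $L_{13}^\theta$, we build the analogous $G'$ taking $K_{13}$ to $L_{13}$ and set $G_2=G'\circ\varphi_\theta^{-1}$. This is legitimate because $\varphi_\theta$ is a symplectomorphism preserving $g$, by \eqref{g does not change under phi}, so it preserves the support condition.

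\textbf{Main obstacle: the $C^1$ bound.} The difficulty is that $K^\tau$ is non-compact in the $F$-direction, since $\|z-w\|$ is unbounded. Everything is $T$-periodic, so only the direction $z-w\to\infty$ is at issue. We handle it with the product structure. The defining data of the isotopy are the time-dependent vector fields $V_2,V_4$ on $\mathbb{C}$ moving the single curves; these are periodic and compactly supported modulo translation, hence uniformly bounded in $C^1$. The velocity of the product isotopy is $(V_2(z),V_4(w))$, and we take as Hamiltonian the sum $tH_2(z)+(1-t)H_4(w)$ of single-curve generating functions. This forces us to work with curve isotopies that are individually Hamiltonian on the cylinder. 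That is exactly why we choose $\delta_i$ so that each curve's flux vanishes: $\alpha_i=\int_{\tilde\gamma_i}x\,dy$, which lies in $[-N,N]$ after normalization.

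With this choice each single-curve isotopy is generated by a Hamiltonian $H_i$ on $\mathbb{R}\times S^1$ supported in $[-N-1,N+1]\times S^1$. These $H_i$ are bounded in $C^1$ because their support is compact in the cylinder. The sum then has finite $C^1$ norm on $X$, and the $g$-cutoff, whose derivatives are bounded, preserves this. Having individually Hamiltonian curve isotopies in fact removes the need for the flux compensation described above.
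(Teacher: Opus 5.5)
Your final argument is exactly the paper's proof: take $\alpha_i=\int_{\tilde\gamma_i}x\,dy$ so that each $\tilde\gamma_i$ is individually Hamiltonian isotopic to $\{\alpha_i\}\times S^1$ on the cylinder, lift periodically, use the product Hamiltonian $tH_2(z)+(1-t)H_4(w)$ cut off by $\chi\circ g$, and conjugate by $\varphi_\theta$ for $K_{13}^\theta$. The earlier material (an arbitrary $\alpha_i$, the parameter-average of $\operatorname{Re}\gamma_i$, joint flux compensation along the path, the tubular-neighbourhood extension) is superseded and should be deleted; also, $\alpha_i\in[-N,N]$ follows most cleanly from $\int_{\tilde\gamma_i}x\,dy=\mu(R_i)-N$, where $R_i$ is the region between $\{-N\}\times S^1$ and $\tilde\gamma_i$, rather than from an ``average'' heuristic, since $dy$ is only a signed weight on the curve.
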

\begin{proof}
    Consider a smooth periodic curve $\gamma$ in $\mathbb{C}$; let $\tilde{\gamma}$ be its image in the cylinder $S\cong \mathbb{R}\times S^1$. There is an $\alpha$ such that the signed area between $\tilde{\gamma}$ and $\{\alpha\}\times S^1$ is zero. Hence, we can find a time-dependent Hamiltonian $\tilde{H}\colon S\times [0,1]\to \mathbb{R}$ supported in $[-N,N]\times S^1\times[0,1]$ with $\phi_{\tilde{H}}^1(\{\alpha\}\times S^1)=\tilde{\gamma}$. This Hamiltonian lifts to a Hamiltonian $H$ over $\mathbb{C}$ that is invariant under translation by $\sqrt{-1}$ with $\phi_H^1(\{\alpha\}\times\mathbb{R})=\gamma$, and is supported in $[-N,N]\times\mathbb{R}\times [0,1]$. Note that $H$ has bounded $C^1$-norm because it is the periodic lift of a compactly supported Hamiltonian in $S\times [0,1]$. Apply this procedure to $\gamma_2$ and $\gamma_4$ to find periodic Hamiltonian functions $H_2, H_4$ and vertical lines $\delta_2 = \{\alpha_2\}\times\mathbb{R}, \delta_4 = \{\alpha_4\}\times\mathbb{R}$ with
    \begin{equation*}
        \phi_{H_2}(\delta_2)=\gamma_2, \hspace{1mm} \phi_{H_4}(\delta_4)=\gamma_4.
    \end{equation*}
    Define a Hamiltonian
    \begin{equation*}
        G = tH_2+(1-t)H_4
    \end{equation*}
    on $\mathbb{C}^2$. This Hamiltonian is invariant under $T$, so it reduces to a Hamiltonian on $X$ that we denote by $G$ as well. Now we get $\phi_{G}^1(K_{24}) = L_{24}$, but this is not supported in $g^{-1}([-2N,2N])$. However, observe that $\phi_G^t(K_{24})$ remains inside $g^{-1}([-N,N])$ for $t\in[0,1]$ by construction; thus we can just put $G_1 = \chi\circ g\cdot G$ where $\chi$ is a smooth function supported in $[-2N, 2N]$ with $\chi|_{[-N,N]} = 1$. Similarly, one can find a Hamiltonian $\hat{G}$ supported in $g^{-1}([-2N, 2N])$ with bounded $C^1$-norm such that $\phi^1_{\hat{G}}(K_{13}) = L_{13}$. We write
    \begin{equation*}
        L_{13}^\theta = \varphi_\theta(L_{13}) = \varphi_\theta(\phi^1_{\hat{G}}(K_{13})) = \varphi_\theta\circ\phi_{\hat{G}}^1\circ\varphi_{-\theta}(K_{13}^\theta).
    \end{equation*}
    Hence, we can put $G_2 = \hat{G}\circ\varphi_{-\theta}$. This Hamiltonian remains supported in $g^{-1}([-2N, 2N])$ because $g$ is invariant under $\varphi_{-\theta}$. It also remains $C^1$-bounded since $d\varphi_{-\theta}$ has finite norm.
\end{proof}
Consider the function $p\colon\mathbb{C}^2\to \mathbb{C}$ defined by $p(z,w)=z-w$. We have $p\circ T=p$, and this descends to a function on $X$ that we denote by $p$ as well. Note that $\Vert p(x)\Vert + g^2(x)$ is another proper function on $X$. 
\begin{prop}
\label{Change to Suitable Pairs}
    Assume $(\gamma_1,\gamma_2,\gamma_3,\gamma_4)$ is a quadruple of smooth periodic curves in $\mathbb{C}$, and they satisfy equation \eqref{The area Lagrangian equation}. Then there exists a suitable pair $(\Bar{L}_{24}, \Bar{L}_{13}^\theta)$ in $X$ such that $L_{24}\cap L_{13}^\theta = \Bar{L}_{24}\cap\Bar{L}_{13}^\theta$.
\end{prop}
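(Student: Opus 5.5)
The plan is to modify $L_{24}$ and $L_{13}^\theta$ only far out in the $\Vert p\Vert$ direction, where they cannot meet, so that near infinity they coincide with the Lagrangians of vertical lines. Suitability will then be inherited from Lemma \ref{Vertical lines are suitable}.

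First choose $N$ with all $\gamma_i$ in $[-N,N]\times\mathbb{R}$. Lemma \ref{Bounded Hamiltonian perturbations} gives vertical lines $\delta_i=\{x=\alpha_i\}$ and Hamiltonians $G_1,G_2$, supported in $g^{-1}([-2N,2N])$ with finite $C^1$ norm, satisfying $\phi^1_{G_1}(K_{24})=L_{24}$ and $\phi^1_{G_2}(K_{13}^\theta)=L_{13}^\theta$. Since each $\alpha_i$ is chosen so that the signed area between $\tilde\gamma_i$ and $\{\alpha_i\}\times S^1$ vanishes, we have $\int_{\tilde\gamma_i}xdy=\alpha_i$ up to the same normalization. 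Hence the hypothesis \eqref{The area Lagrangian equation} turns into \eqref{line being suitable equation}, and $(K_{24},K_{13}^\theta)$ is a suitable pair by Lemma \ref{Vertical lines are suitable}.

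Next take a cutoff $\rho\colon[0,\infty)\to[0,1]$ with $\rho=1$ on $[0,R]$ and $\rho=0$ on $[R+1,\infty)$. Set $\bar G_i=(\rho\circ\Vert p\Vert)\,G_i$ and define $\bar L_{24}=\phi^1_{\bar G_1}(K_{24})$ and $\bar L_{13}^\theta=\phi^1_{\bar G_2}(K_{13}^\theta)$. The support of $\bar G_i$ lies in $\{\Vert p\Vert\le R+1\}\cap g^{-1}([-2N,2N])$, which is compact because $\Vert p\Vert+g^2$ is proper. So these are compactly supported Hamiltonian isotopies, and Corollary \ref{Invariance under compactly supported perturbations} shows the new pair is suitable.

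It remains to prove $\bar L_{24}\cap\bar L_{13}^\theta=L_{24}\cap L_{13}^\theta$, and this is the main obstacle. The key point is that $G_i$ has bounded $C^1$ norm, so $\Vert X_{G_i}\Vert\le c$ uniformly. Along the flow, $\Vert p\Vert$ therefore changes by at most some constant $c'$ over time $[0,1]$, and the same bound holds for $\bar G_i$ once $R$ is large, because the cutoff's gradient is bounded.

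Lemma \ref{bounded band similarity type} and Lemma \ref{compactness of intersections} give more than compactness. Any point of $L_{24}\cap L_{13}^\theta$, and likewise any point of $\bar L_{24}\cap\bar L_{13}^\theta$ (whose $g$-values on the relevant curves remain in a strip of width depending on $N$), satisfies $\Vert p\Vert\le b$. Here $b=b(N',t,\theta)$ for some $N'$ controlling the real parts, and $N'$ must be checked: $g$ is preserved by $\varphi_\theta$, while the real parts along the flowed curves stay bounded.

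Now take $R> b+c'$. On $\{\Vert p\Vert\le b+c'\}$ the flows of $\bar G_i$ and $G_i$ agree for trajectories that start there. So $\bar L$ and $L$ coincide on $\{\Vert p\Vert\le b\}$, and both intersections lie in this region.

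The delicate part is controlling real parts, so that Lemma \ref{bounded band similarity type} applies to points of $\bar L_{24}$ and $\bar L_{13}^\theta$. I would try to handle it by arranging the isotopies to preserve each factor's strip $[-N,N]\times\mathbb{R}$. Concretely, I would use a product-type cutoff, or note that for large $\Vert p\Vert$ the Lagrangians are already the vertical-line ones, which lie in the strip.
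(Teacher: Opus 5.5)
Your architecture is the paper's. You cut $G_i$ off by a function of $\Vert p\Vert$ and get suitability from Lemma \ref{Vertical lines are suitable} and Corollary \ref{Invariance under compactly supported perturbations}. You then use that the isotopies move $p$ by at most a constant $c'$, independent of $R$ since $\sup|\rho'|$ is, to see that $\bar L$ and $L$ agree where $\Vert p\Vert$ is small compared to $R$.

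The gap is the step you flag and leave open: a bound on $\Vert p\Vert$ over $\bar L_{24}\cap\bar L_{13}^\theta$ that is independent of $R$. Neither remedy you offer supplies it.
\begin{enumerate}
\item Knowing that $\bar L$ coincides with $K$ for $\Vert p\Vert$ well above $R+1$, and with $L$ for $\Vert p\Vert$ well below $R$, says nothing about the annulus in between. There the perturbed Lagrangians are neither, and new intersection points could a priori appear.
\item The isotopies cannot be made to preserve each factor's strip. Any cutoff that makes the Hamiltonian compactly supported on $X$ must depend on $p=z-w$. With respect to $\omega$, the field $X_{\rho(\Vert p\Vert)}$ has horizontal components $-\frac{1}{t}\partial_y\rho(\Vert p\Vert)$ and $\frac{1}{1-t}\partial_y\rho(\Vert p\Vert)$ in the two factors, where $y=\operatorname{Im}p$. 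Meanwhile $G_1=\chi(g)\,(tH_2+(1-t)H_4)$ can be nonzero when $z$ sits on the boundary of its strip, because of the $H_4(w)$ term. Then the term $G_1X_{\rho(\Vert p\Vert)}$ moves $z$ horizontally out of $[-N,N]\times\mathbb{R}$.
\end{enumerate}
So your assertion that the real parts along the flowed curves stay bounded is, at this point, the whole content of the proposition, and it is not proved.

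The missing observation is that you never need the individual real parts.
\begin{enumerate}
\item On $K_{24}$ one has $\operatorname{Re}p=\alpha_2-\alpha_4$. On $K_{13}^\theta$ one has $\operatorname{Re}(e^{-i\theta}p)=\alpha_1-\alpha_3$, because $p\circ\varphi_\theta=e^{i\theta}p$. Both have absolute value at most $2N$.
\item Let $x\in\bar L_{24}\cap\bar L_{13}^\theta$ be the time-one image of $y_1\in K_{24}$ and of $y_2\in K_{13}^\theta$. The drift bound gives $|\operatorname{Re}p(x)|\le 2N+c'$ and $|\operatorname{Re}(e^{-i\theta}p(x))|\le 2N+c'$.
\item Since $\theta\in(0,\pi)$, these two real-linear functionals of $p$ are independent. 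Hence $\Vert p(x)\Vert\le B$ with $B$ depending only on $N$, $c'$ and $\theta$, not on $R$.
\end{enumerate}
This is exactly how the paper argues; it then chooses the cutoff radius larger than $\max(b,B)+c'$.

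If you prefer to keep your route through Lemma \ref{bounded band similarity type}, the same information suffices. Write $\operatorname{Re}z=g+(1-t)\operatorname{Re}p$ and $\operatorname{Re}w=g-t\operatorname{Re}p$. The flows preserve $g^{-1}([-2N,2N])$ and $g\circ\varphi_\theta=g$, so $|g|\le 2N$ on $\bar L_{24}$ and on $\varphi_{-\theta}(\bar L_{13}^\theta)$. Combined with the drift bound on $\operatorname{Re}p$, both components of lifts of these points have real parts bounded by $N'=4N+c'$, which is again independent of $R$.
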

\begin{proof}
    Find a large $N$ so that each $\gamma_i$ is inside $[-N,N]\times\mathbb{R}$.
    By Lemma \ref{Bounded Hamiltonian perturbations}, there are a quadruple of vertical lines $(\delta_1, \delta_2,\delta_3,\delta_4)$ in $[-N,N]\times\mathbb{R}$ and two time-dependent Hamiltonian $G_1, G_2$ with $\Vert G_i\Vert_{C^1(X\times[0,1])}<\infty$ for each $i$ such that
    \begin{equation*}
        \phi_{G_1}^1(K_{24}) = L_{24},\hspace{1mm} \phi_{G_2}^1(K_{13}^\theta) = L_{13}^\theta.
    \end{equation*}
    Notice that the symplectic action one form on $\mathcal{P}(K_{24}, K_{13}^\theta)$ is exact because it is exact on $\mathcal{P}(L_{24}, L
    _{13}^\theta)$ by equation \eqref{The area Lagrangian equation}, and this pair is a Hamiltonian perturbation of $(K_{24}, K_{13}^\theta)$; we refer the unfamiliar reader to \cite[Chapter 3]{14}. 
    Let $A$ be a sufficiently large number, and consider a non-negative smooth function $h$ on $\mathbb{R}$ with
    \begin{equation*}
        h(r)=
        \begin{cases}
             1 &r\leq A,\\
             0 &r>>0,
        \end{cases}
    \end{equation*}
    and $\vert h'(r)\vert\leq t(1-t)$.
    We define a Hamiltonian function $H_i$ by
    \begin{equation}
    \label{Definition of the compactly supported perturbations}
        H_i = h(\Vert p(x)\Vert)\cdot G_i
    \end{equation}
    for each $i=1,2$. Both of these Hamiltonian functions are compactly supported, since each $G_i$ is supported in $g^{-1}([-2N, 2N])$, and $h(\Vert p(x)\Vert)$ vanishes if $\Vert p(x)\Vert>>0$. We let
    \begin{equation*}
        \Bar{L}_{24} \coloneqq \phi_{H_1}(K_{24}),\hspace{2mm} \Bar{L}_{13}^\theta \coloneqq \phi_{H_2}(K_{13}^\theta).
    \end{equation*}
    The pair $(\Bar{L}_{24}, \Bar{L}_{13}^\theta)$ is suitable because it is a compactly supported perturbation of the suitable pair $(K_{24}, K_{13}^\theta)$. It remains to show $L_{24}\cap L_{13}^\theta = \Bar{L}_{24}\cap\Bar{L}_{13}^\theta$. Denote the function $h(\Vert p(x)\Vert)$ by $V$; we have
    \begin{equation*}
        X_{H_i} = X_V\cdot G_i+V\cdot X_{G_i}
    \end{equation*}
    for each $i=1,2$. We can write
    \begin{equation*}
        X_V(x) = \frac{h'(\Vert p(x)\Vert)}{\Vert p(x)\Vert}\cdot\Big(\frac{\sqrt{-1}p(x)}{t}, \frac{\sqrt{-1}p(x)}{t-1}\Big),
    \end{equation*}
    where we identify $T_xX\cong\mathbb{C}^2$. Note
    \begin{align*}
        & dp(\sqrt{-1}(p(x),0)) = \sqrt{-1}p(x), \\
        &dp((0, \sqrt{-1}p(x))) = -\sqrt{-1}p(x).
    \end{align*}
    Therefore,
    \begin{equation*}
        \Vert dp(X_V)\Vert = \frac{h'(\Vert p(x)\Vert)}{\Vert p(x)\Vert}\cdot \frac{\Vert p(x)\Vert}{t(1-t)}\leq 1.
    \end{equation*}
    This shows
    \begin{equation*}
        \Vert dp(X_{H_i})\Vert \leq \Vert dp(X_V)\Vert\cdot\vert G_i\vert + V\cdot \Vert dp(X_{G_i})\Vert\leq 2\Vert G_i\Vert_{C^1(X\times [0,1])}
    \end{equation*}
    for each $i$. Let $M = 2\max(\Vert G_1\Vert_{C^1(X\times [0,1])}, \Vert G_2\Vert_{C^1(X\times [0,1])})$.
    Now take a point $y = \phi_{H_i}^s(x)$ for time $s\in[-1,1]$; we have
    \begin{equation}
    \label{Bounding the p distance}
        \Vert p(y) - p(x)\Vert\leq \int_0^s \Vert dp(X_{H_i})\Vert dt\leq M\cdot\vert s\vert\leq M. 
    \end{equation}
    Thus
    \begin{equation*}
        \Vert p(\phi^s_{H_i}(x))\Vert\leq \Vert p(x)\Vert + M
    \end{equation*}
    for each $i$ and $s\in [-1,1]$. Hence, if $\Vert p(x)\Vert\leq A-M$, then $\Vert p(\phi^s_{H_i}(x))\Vert\leq A$. By equation \eqref{Definition of the compactly supported perturbations}, $H_i$ and $G_i$ agree on the points with $\Vert p(x)\Vert\leq A$, so
    \begin{equation*}
        \phi_{H_i}^s(x) = \phi_{G_i}^s(x)
    \end{equation*}
    for every $s\in[-1,1]$ and each $i$ if $\Vert p(x)\Vert\leq A-M$.
    We conclude that
    \begin{equation}
    \label{p-bounded equality of Lagrangians}
        L_{24}\cap\{x:\Vert p(x)\Vert\leq A-M\} = \Bar{L}_{24}\cap\{x:\Vert p(x)\Vert\leq A-M\}. 
    \end{equation}
    The same claim holds for $L_{13}^\theta$ and $\Bar{L}_{13}^\theta$. The next step is to bound $\Vert p(x)\Vert$ over $\Bar{L}_{24}\cap \Bar{L}_{13}^\theta$. Take a point $x$ in this intersection; there are $y_1\in K_{24}$ and $y_2\in K_{13}^\theta$ such that
    \begin{equation*}
        x = \phi^1_{H_1}(y_1)=\phi^1_{H_2}(y_2).
    \end{equation*}
    We have
    \begin{align*}
        & \vert\operatorname{Re}(p(x)-p(y_1))\vert\leq \Vert p(x)-p(y_1)\Vert\leq M,\\
         \vert\operatorname{Re}&(e^{-i\theta}\cdot(p(x)-p(y_2)))\vert\leq \Vert p(x)-p(y_2)\Vert\leq M
    \end{align*}
    by equation \eqref{Bounding the p distance}. Observe that both $\vert\operatorname{Re}(p(y_1))\vert$ and $\vert\operatorname{Re}(e^{-i\theta}\cdot p(y_2))\vert$ are less than $2N$ because we assumed the quadruple $(\delta_1, \delta_2,\delta_3,\delta_4)$ is inside $[-N, N]\times\mathbb{R}$. Therefore, we get
    \begin{align*}
        & \vert \operatorname{Re}(p(x))\vert\leq M+2N, \\
        \vert\operatorname{Re}&(e^{-i\theta}\cdot p(x))\vert\leq M+2N.
    \end{align*}
    Combining these two equations, we have
    \begin{equation*}
        \Vert p(x)\Vert\leq (M+2N)\cdot\sqrt{1+\big(\frac{1+\vert\cos\theta\vert}{\sin\theta}\big)^2} \coloneqq B.
    \end{equation*}
    We conclude that
    \begin{equation}
    \label{int one being bounded}
        \Bar{L}_{24}\cap\Bar{L}_{13}^\theta\subset \{x: \Vert p(x)\Vert\leq B\}.
    \end{equation}
    We also know
    \begin{equation}
    \label{int two being bounded}
        L_{24}\cap L_{13}^\theta\subset \{x: \Vert p(x)\Vert\leq b(N,t,\theta)\},
    \end{equation}
    where $b$ is the constant determined in Lemma \ref{compactness of intersections}. Now choose $A$ large enough such that
    \begin{equation*}
        \max(b,B)< A-M.
    \end{equation*}
    With this choice, we conclude 
    \begin{equation*}
        L_{24}\cap L_{13}^\theta = \Bar{L}_{24}\cap\Bar{L}_{13}^\theta
    \end{equation*}
    by equations \eqref{p-bounded equality of Lagrangians}, \eqref{int one being bounded} and \eqref{int two being bounded}.
\end{proof}
\begin{cor}
\label{non-vanishing HF}
    For the pair $(\Bar{L}_{24}, \Bar{L}_{13}^\theta)$, we have
    \begin{equation*}
        HF(\Bar{L}_{24}, \Bar{L}_{13}^\theta)\cong\mathbb{F}_2^2.
    \end{equation*}
\end{cor}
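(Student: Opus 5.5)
The plan is to reduce the computation to the vertical-line model of Proposition \ref{HF is the circle homology}, using the invariance statement of Corollary \ref{Invariance under compactly supported perturbations}. In the proof of Proposition \ref{Change to Suitable Pairs} the pair is built as
\begin{equation*}
\Bar{L}_{24}=\phi^1_{H_1}(K_{24}),\qquad \Bar{L}_{13}^\theta=\phi^1_{H_2}(K_{13}^\theta).
\end{equation*}
Here $(K_{24},K_{13}^\theta)$ is the pair associated to the quadruple of vertical lines $(\delta_1,\delta_2,\delta_3,\delta_4)$ from Lemma \ref{Bounded Hamiltonian perturbations}, and $H_1,H_2$ are compactly supported Hamiltonians. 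So the first step is to show that $(K_{24},K_{13}^\theta)$ is itself a suitable pair.

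By Lemma \ref{Vertical lines are suitable}, suitability of the vertical-line pair is equivalent to the linear relation \eqref{line being suitable equation} among the $\alpha_i$. This relation is in turn equivalent, by Proposition \ref{The area Lagrangian proposition}, to exactness of the action form on $\mathcal{P}(K_{24},K_{13}^\theta)$. That exactness was already noted in the proof of Proposition \ref{Change to Suitable Pairs}. The reason is that the periods $\lambda(\beta_i)$ are unchanged under Hamiltonian isotopy, and $G_1,G_2$ carry $(K_{24},K_{13}^\theta)$ to $(L_{24},L_{13}^\theta)$, which satisfies \eqref{The area Lagrangian equation} by hypothesis. Concretely, the $\alpha_i$ were chosen so that each $\tilde\gamma_i$ and $\{\alpha_i\}\times S^1$ bound zero signed area. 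Hence $\int_{\tilde\gamma_i}x\,dy=\alpha_i$, and \eqref{The area Lagrangian equation} becomes exactly \eqref{line being suitable equation}. With suitability in hand, Proposition \ref{HF is the circle homology} gives $HF(K_{24},K_{13}^\theta)\cong\mathbb{F}_2^2$.

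Next, apply Corollary \ref{Invariance under compactly supported perturbations} twice. The first application perturbs the second Lagrangian by $\phi_{H_2}$, giving
\begin{equation*}
HF(K_{24},K_{13}^\theta)\cong HF(K_{24},\Bar{L}_{13}^\theta).
\end{equation*}
For the first Lagrangian, note that $HF(\phi^1_{H_1}(K_{24}),\Bar{L}_{13}^\theta)$ is canonically isomorphic to $HF(K_{24},(\phi^1_{H_1})^{-1}(\Bar{L}_{13}^\theta))$. This holds because the symplectomorphism $\phi^1_{H_1}$ equals $j$-compatible data away from a compact set, and conjugating the admissible data by it identifies the two chain complexes. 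Since $(\phi^1_{H_1})^{-1}$ is again a compactly supported Hamiltonian isotopy, the corollary applies once more. Alternatively, one can rerun the proof of the corollary with a moving boundary condition on the first Lagrangian instead of the second; the argument is symmetric.

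The main obstacle is this last step: checking that moving the first Lagrangian, or conjugating by $\phi^1_{H_1}$, preserves suitability and compactness. In particular, the Hopf-lemma condition (5) of Definition \ref{Suitable pair Defintion} and the bound on $g$ must hold for the transformed pair. Both are unaffected, because $\phi^1_{H_1}$ is the identity outside a compact set, where $J_t=j$ is required anyway. Composing the isomorphisms yields
\begin{equation*}
HF(\Bar{L}_{24},\Bar{L}_{13}^\theta)\cong\mathbb{F}_2^2.
\end{equation*}
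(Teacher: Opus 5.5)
Your proposal is correct and follows the paper's route: the paper's proof simply notes that $(\Bar{L}_{24},\Bar{L}_{13}^\theta)$ is a compactly supported Hamiltonian perturbation of the vertical-line pair $(K_{24},K_{13}^\theta)$, then invokes Corollary \ref{Invariance under compactly supported perturbations} together with Proposition \ref{HF is the circle homology}. Your additional checks make explicit two details the paper leaves implicit: that $(K_{24},K_{13}^\theta)$ satisfies \eqref{line being suitable equation} because $\int_{\tilde\gamma_i}x\,dy=\alpha_i$, and that the first Lagrangian can also be moved (by conjugating with $\phi^1_{H_1}$), which is needed because the corollary as stated only moves $L_2$.
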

\begin{proof}
    The pair $(\Bar{L}_{24}, \Bar{L}_{13}^\theta)$ is a compactly supported perturbation of the pair $(K_{24}, K_{13}^\theta)$. Thus the claim follows from Corollary \ref{Invariance under compactly supported perturbations} and Proposition \ref{HF is the circle homology}.
\end{proof}
\section{The Four Periodic Inscription Theorem}
\label{sec: main section}

\begin{proof}[\textbf{Proof of Theorem \ref{Main Thm}}]
Consider the quadruple $(\gamma_1,\gamma_2,\gamma_3,\gamma_4)$, and assume they satisfy the area relation in \eqref{t-area equality relation} with 
\begin{equation*}
    \gamma_1(\mathbb{R}) \cap \gamma_2(\mathbb{R})\cap \gamma_3(\mathbb{R})\cap \gamma_4(\mathbb{R})=\emptyset.
\end{equation*}
We construct the two Lagrangians $L_{24}$ and $L_{13}^\theta$ corresponding to this quadruple in $X$. Since we know the intersection of these four curves is empty, we have
\begin{equation*}
    \frac{\mathcal{I}(\gamma_1,\gamma_2,\gamma_3,\gamma_4)}{\text{Translation by $\sqrt{-1}$}} = L_{24}\cap L_{13}^\theta,
\end{equation*}
and it suffices to show $L_{24}\cap L_{13}^\theta$ is non-empty. By Proposition \ref{Change to Suitable Pairs}, we know there is a suitable pair of Lagrangians $(\Bar{L}_{24}, \Bar{L}_{13}^\theta)$ in $X$ such that $L_{24}\cap L_{13}^\theta = \Bar{L}_{24}\cap\Bar{L}_{13}^\theta$, and we have $ HF(\Bar{L}_{24}, \Bar{L}_{13}^\theta)\cong\mathbb{F}_2^2$ by Corollary \ref{non-vanishing HF}. Therefore, the set $\mathcal{I}(\gamma_1,\gamma_2,\gamma_3,\gamma_4)$ must be non-empty.
\end{proof}
\begin{rem}
    If the intersection between $L_{24}$ and $L_{13}^\theta$ is transverse, then we have at least two inscriptions that are not related under translation because
    \begin{equation*}
        \vert L_{24}\cap L_{13}^\theta\vert=\vert \Bar{L}_{24}\cap \Bar{L}_{13}^\theta\vert\geq \operatorname{rk}(HF(\Bar{L}_{24}, \Bar{L}_{13}^\theta))\geq 2.
    \end{equation*}
\end{rem}
\begin{proof}[\textbf{Proof of Theorem \ref{main thm for continuous curves}}]
    Take a quadruple of continuous curves $(\gamma_1,\gamma_2,\gamma_3,\gamma_4)$ with zero Lebesgue measure, and suppose they satisfy the area relation in \eqref{t-area equality relation}. Consider a sequence of smooth quadruples $(\gamma_1^n,\gamma_2^n,\gamma_3^n,\gamma_4^n)$ that are converging uniformly to $(\gamma_1,\gamma_2,\gamma_3,\gamma_4)$. Translate each $\gamma_4^n$ by a horizontal vector $r_n$ such that each quadruple $(\gamma_1^n,\gamma_2^n,\gamma_3^n,\gamma_4^n+r_n)$  satisfies the area relation in \eqref{t-area equality relation}. Notice that $r_n\to 0$ because 
    \begin{equation*}
        tA(\gamma_1^n,\gamma_2^n)+(1-t)A(\gamma_3^n,\gamma_4^n)\to tA(\gamma_1,\gamma_2)+(1-t)A(\gamma_3,\gamma_4)=0.
    \end{equation*}
    Hence, $\gamma_4^n+r_n$ also converges to $\gamma_4$. We know there is a type $(t,\theta)$ trapezoid $Q_n$ inscribed in each quadruple $(\gamma_1^n,\gamma_2^n,\gamma_3^n,\gamma_4^n+r_n)$ by Theorem \ref{Main Thm}. Moreover, we can assume each $Q_n$ is inside a bounded box by an argument similar to Lemma \ref{compactness of intersections}. Therefore, we can assume the four vertices of $Q_n$ are converging to four points $p_1, p_2, p_3,$ and $p_4$ with $p_i\in \gamma_i(\mathbb{R})$ for each $i$. It is possible that all four points are the same, and this gives an intersection point among the four curves, but we assumed their intersection is empty. We conclude that the quadrilateral formed by $p_i$'s is a type $(t,\theta)$ trapezoid inscribed in $(\gamma_1,\gamma_2,\gamma_3,\gamma_4)$. 
\end{proof}
\begin{cor}
\label{main cor}
    Suppose $(\gamma_1,\gamma_2,\gamma_3,\gamma_4)$  is a quadruple of continuous periodic curves in $\mathbb{C}$ with zero Lebesgue measure, and they satisfy the area relation in \eqref{t-area equality relation}. Then at least one of the following is true.
    \begin{enumerate}
        \item $(\gamma_1,\gamma_2,\gamma_3,\gamma_4)$  inscribes type $(t,\theta)$ trapezoid for every $\theta\in (0,\pi)$.
        \item The set 
        \begin{equation*}
            \gamma_1(\mathbb{R})\cap \gamma_2(\mathbb{R})\cap \gamma_3(\mathbb{R})\cap \gamma_4(\mathbb{R})
        \end{equation*}
        is non-empty.
    \end{enumerate}
\end{cor}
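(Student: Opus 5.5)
This is a direct dichotomy derived from Theorem \ref{main thm for continuous curves}. Suppose the second alternative fails, i.e.
\begin{equation*}
    \gamma_1(\mathbb{R})\cap \gamma_2(\mathbb{R})\cap \gamma_3(\mathbb{R})\cap \gamma_4(\mathbb{R})=\emptyset.
\end{equation*}
Then $(\gamma_1,\gamma_2,\gamma_3,\gamma_4)$ satisfies every hypothesis of Theorem \ref{main thm for continuous curves}. The curves are continuous periodic embeddings with zero Lebesgue measure, their images have empty common intersection, and they satisfy \eqref{t-area equality relation} with the areas defined by \eqref{signed area for continuou curves}. That theorem then gives a balanced inscription of a type $(t,\theta)$ trapezoid for every $\theta\in(0,\pi)$, which is the first alternative.

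Before doing this, I will check that the statement implicitly assumes each $\gamma_i$ is an embedding, as in Theorem \ref{main thm for continuous curves}. This matters because the Jordan curve argument behind $U^{\pm}_{\tilde\gamma}$, and hence the definition \eqref{signed area for continuou curves}, needs embedded quotient curves. I will read the statement with that hypothesis. So the logical structure is simply: if not (2), then (1).

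For completeness, I would also recall where the empty-intersection hypothesis enters the proof of Theorem \ref{main thm for continuous curves}. It is used only in the limiting step, to rule out the inscribed trapezoids $Q_n$ degenerating to a single point, since such a limit point would lie on all four curves. This makes clear that (2) is exactly the obstruction. There is no substantive obstacle here: the only point needing care is that the hypotheses match, in particular the zero-measure condition that makes the signed areas continuous under smooth approximation. All of this was already handled in the previous theorem.
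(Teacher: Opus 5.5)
Your proposal is correct and matches the paper, whose proof simply states that the corollary follows from the argument for Theorem \ref{main thm for continuous curves}. If the four-fold intersection is empty, that theorem gives the type $(t,\theta)$ inscriptions for every $\theta$; equivalently, in its convergence argument the only possible degeneration is a collapse to a point lying on all four curves. You are also right to read the embedding hypothesis into the statement, since Theorem \ref{main thm for continuous curves} and the area definition \eqref{signed area for continuou curves} both require it.
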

\begin{proof}
    This follows from the argument in the proof of Theorem \ref{main thm for continuous curves}.
\end{proof}

If we have a triple of smooth periodic curves $(\gamma_1,\gamma_2,\gamma_3)$, and their images $(\tilde{\gamma}_1, \tilde{\gamma}_2, \tilde{\gamma}_3)$ in $\mathbb{R}\times S^1$ have
\begin{equation*}
    t(\int_{\tilde{\gamma}_1}xdy-\int_{\tilde{\gamma}_2}xdy)+ (1-t)\int_{\tilde{\gamma}_3}xdy=0,
\end{equation*}
then the quadruple $(\gamma_1,\gamma_2,\gamma_3, \{x=0\})$ satisfies the area relation in \eqref{t-area equality relation}, so we can apply Corollary \ref{main cor} to these four curves. This gives us the following triangular inscription for the case of three curves, which is a generalization of \cite[Conjecture 6.1]{1}. 

\begin{thm}
    Let $\Delta$ be a triangle with angles $\theta_1\leq \theta_2\leq \theta_3$. Suppose $\gamma_1,\gamma_2$ are continuous periodic curves with zero Lebesgue measure, and $\gamma_3$ is a rectifiable periodic curve. Moreover, assume we have
    \begin{equation}
        \label{triangle area relation}
        \sin(\theta_3-\theta_1)A(\gamma_1,\gamma_2)+\sin(\theta_2)\int_{\tilde{\gamma}_3}xdy=0,
    \end{equation}
    and the intersection of these three curves is empty. Then there exist points $(p_1, p_2, p_3)$ in $\mathbb{C}$ with $p_i\in \gamma_i(\mathbb{R})$ for every $i$, the triangle formed by these points is similar to $\Delta$, and we have
    \begin{equation}
    \label{triangle real equation}
        \sin(\theta_3-\theta_1)\cdot(\operatorname{Re}(p_1)-\operatorname{Re}(p_2))+\sin(\theta_2)\operatorname{Re}(p_3)=0.
    \end{equation}
\end{thm}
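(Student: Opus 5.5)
The plan is to realize the triangle as three of the four vertices of a type $(t,\theta)$ trapezoid whose fourth vertex lies on the vertical line $\delta=\{x=0\}$. Then apply Corollary \ref{main cor} to the quadruple $(\gamma_1,\gamma_2,\gamma_3,\delta)$.

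\textbf{Step 1: matching the triangle to a trapezoid.} Recall the labelling of Figure \ref{fig1}: $z\in\gamma_1$, $w\in\gamma_3$, and $\varphi_\theta(z,w)=(\tilde z,\tilde w)$ with $\tilde z\in\gamma_2$, $\tilde w\in\gamma_4=\delta$. The diagonals meet at $O=tz+(1-t)w$, with $|zO|=|O\tilde z|=(1-t)d$ and $|Ow|=|O\tilde w|=td$. The triangle $z\tilde z w$ therefore has an isosceles piece $zO\tilde z$ with apex angle $\pi-\theta$ at $O$, glued to the triangle $O\tilde z w$.
\begin{itemize}
\item As $(t,\theta)$ ranges over $(0,1)\times(0,\pi)$, the shape of $z\tilde z w$ ranges over all triangles.
\item I will set $p_1=z$, $p_2=\tilde z$, $p_3=w$, and solve for $(t,\theta)$ in terms of $\theta_1,\theta_2,\theta_3$ with the law of sines, choosing which angle of $\Delta$ sits at which vertex.
\item The target is the relation $t/(1-t)=\sin(\theta_3-\theta_1)/\sin\theta_2$. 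Under it, \eqref{triangle area relation} becomes exactly $tA(\gamma_1,\gamma_2)+(1-t)A(\gamma_3,\delta)=0$, since $\int_{\tilde\delta}x\,dy=0$.
\end{itemize}
I expect the natural choice is to put the largest angle $\theta_3$ at $\tilde z$ or $w$, so that $\theta_3-\theta_1>0$ appears as an angle of the sub-triangle $O\tilde z w$ and $\theta=\theta_3-\theta_1$ or its complement. This angle bookkeeping is the main computational obstacle. I would first try the ansatz $\theta=\pi-(\theta_3-\theta_1)$ and verify it against the law of sines in both sub-triangles.

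\textbf{Step 2: the fourth vertex and \eqref{triangle real equation}.} By \eqref{the flow equation}, both $\tilde z$ and $\tilde w$ are real-affine combinations of $z$ and $w$ with complex coefficients. So $\tilde w$ can be written as $\alpha p_1+\beta p_2+\gamma p_3$, where the real coefficients satisfy $\alpha+\beta+\gamma=1$ and the representation is determined by the triangle shape. The condition $\tilde w\in\delta$ says $\operatorname{Re}(\tilde w)=0$. I will check that, with the $(t,\theta)$ from Step 1, this is proportional to $\sin(\theta_3-\theta_1)(\operatorname{Re}p_1-\operatorname{Re}p_2)+\sin\theta_2\operatorname{Re}p_3=0$. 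This is a consistency check on the choice made in Step 1, and it is a second place where the choice may need adjusting, for example by swapping the roles of $\gamma_1$ and $\gamma_2$.

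\textbf{Step 3: applying the corollary.} The hypotheses of Corollary \ref{main cor} hold:
\begin{itemize}
\item $\gamma_1,\gamma_2$ have zero measure by assumption.
\item $\gamma_3$ is rectifiable, so it has zero measure, and $\int_{\tilde\gamma_3}x\,dy$ is defined and agrees with $A(\gamma_3,\delta)$ in the sense of \eqref{signed area for continuou curves}.
\item $\delta$ is smooth.
\item The quadruple intersection is empty because $\gamma_1\cap\gamma_2\cap\gamma_3=\emptyset$.
\end{itemize}
The corollary then yields a type $(t,\theta)$ trapezoid with vertices $z\in\gamma_1$, $\tilde z\in\gamma_2$, $w\in\gamma_3$, $\tilde w\in\delta$. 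Taking $p_1=z$, $p_2=\tilde z$, $p_3=w$ gives a triangle similar to $\Delta$ by Step 1, satisfying \eqref{triangle real equation} by Step 2. These vertices are distinct because $z\neq w$ off the diagonal.
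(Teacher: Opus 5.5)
Your strategy is the paper's: realize $\Delta$ as $\triangle z\tilde z w$ inside a type $(t,\theta)$ trapezoid whose fourth vertex lies on $\{x=0\}$, with $t/(1-t)=\sin(\theta_3-\theta_1)/\sin\theta_2$. Steps 2 and 3 are sound. From \eqref{the flow equation} one gets $\tilde w=w+\tfrac{t}{1-t}(z-\tilde z)$ for every $\theta$, so $\operatorname{Re}\tilde w=0$ is exactly \eqref{triangle real equation}. Applying Corollary \ref{main cor} directly is also a legitimate shortcut for the paper's reduction to smooth curves, using Green's theorem to identify $\int_{\tilde\gamma_3}x\,dy$ with $A(\gamma_3,\{x=0\})$.

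Step 1, however, is not carried out, and your ansatz fails. Since $\tilde z-O=e^{i\theta}(z-O)$, the apex angle of $zO\tilde z$ is $\theta$, not $\pi-\theta$. So the angle of $\triangle z\tilde zw$ at $z$ is $(\pi-\theta)/2$. With $\theta=\pi-(\theta_3-\theta_1)$ this is $(\theta_3-\theta_1)/2$, which in general is not an angle of $\Delta$. The right choice is $\theta=\pi-2\theta_1$. Then $O\tilde zw$ has angle $2\theta_1$ at $O$ and $|Ow|:|O\tilde z|=t:(1-t)$. By the law of sines its angles at $\tilde z$ and $w$ are $\theta_3-\theta_1$ and $\theta_2$, so $\triangle z\tilde zw$ has angles $\theta_1,\theta_3,\theta_2$ at $z,\tilde z,w$.

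The genuine gap is the equilateral case, which your construction cannot reach. Your assertion that $\triangle z\tilde zw$ realizes every triangle shape is false. Its angle at $\tilde z$ is $\frac{\pi-\theta}{2}+\angle O\tilde zw$, strictly larger than its angle $\frac{\pi-\theta}{2}$ at $z$, so it is never equilateral for $t\in(0,1)$. Correspondingly, your target relation forces $t=0$ when $\theta_1=\theta_3$; there $\omega$ degenerates and none of the trapezoid theorems apply. In this case the hypothesis reduces to $\int_{\tilde\gamma_3}x\,dy=0$, and you need a separate limiting argument, as in the paper:
\begin{enumerate}
\item Take non-equilateral $\Delta_n\to\Delta$ and translate $\gamma_3$ horizontally by $r_n=-\sin(\theta_3^n-\theta_1^n)A(\gamma_1,\gamma_2)/\sin\theta_2^n\to 0$, so that \eqref{triangle area relation} holds for $\Delta_n$ and $(\gamma_1,\gamma_2,\gamma_3+r_n)$. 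The triple intersection stays empty for small $r_n$, by compactness in the cylinder.
\item Apply the non-equilateral case.
\item Translate the resulting triangles by integer multiples of $\sqrt{-1}$ into a fixed box. This works because their diameters are bounded: their angles stay near $\pi/3$ and their vertices lie in a fixed vertical strip.
\item Extract a convergent subsequence. The limit cannot collapse to a point since $\gamma_1\cap\gamma_2\cap\gamma_3=\emptyset$, so it is an equilateral triangle with $p_i\in\gamma_i$. Finally, \eqref{triangle real equation} passes to the limit as $\operatorname{Re}(p_3)=0$.
\end{enumerate}
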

\begin{proof}
    First we prove the claim for when $\theta_3\neq \theta_1$; this is when $\Delta$ is not an equilateral triangle. Choose $t\in(0,1)$ such that
    \begin{equation*}
        \frac{t}{1-t} = \frac{\sin(\theta_3-\theta_1)}{\sin(\theta_2)}.
    \end{equation*}
    It suffices to prove the result for smooth curves, since the case of continuous curves follows by a convergence argument similar to the proof of Theorem \ref{main thm for continuous curves}. Suppose $\gamma_i$'s are smooth; then we know the quadruple $(\gamma_1,\gamma_2,\gamma_3,\{x=0\})$ satisfies the area relation in \eqref{t-area equality relation} by equation \eqref{triangle area relation}. There is a type $(t, \pi-2\theta_1)$ trapezoid inscribed in $(\gamma_1,\gamma_2,\gamma_3,\{x=0\})$ by Theorem \ref{Main Thm}. Notice that the three vertices of this trapezoid that lie on $(\gamma_1,\gamma_2,\gamma_3)$ form a triangle similar to $\Delta$, and they satisfy \eqref{triangle real equation} because the fourth vertex lies on $\{x=0\}$. Now assume $\Delta$ is an equilateral triangle. Approximate $\Delta$ with a sequence of triangles $\Delta_n$ with angles $(\theta_1^n, \theta_2^n,\theta_3^n)$ and $\theta_1^n\neq \theta_3^n$. We can find a sequence of real numbers $r_n\to 0$ such that $(\gamma_1,\gamma_2,\gamma_3+r_n)$ satisfies \eqref{triangle area relation} with angles $(\theta_1^n, \theta_2^n,\theta_3^n)$. Thus  
    there is an inscription of $\Delta_n$ in $(\gamma_1,\gamma_2,\gamma_3+r_n)$ satisfying \eqref{triangle real equation}. Furthermore, we can assume there is a vertex of $\Delta_n$ that lies in a bounded rectangle in $\mathbb{C}$ by translating each $\Delta_n$ with an integer multiple of $\sqrt{-1}$.
    We can show all of these triangles have bounded diameter by an argument similar to Lemma \ref{bounded band similarity type}. Hence, each $\Delta_n$ lies in a bounded box, and a convergent subsequence of $\Delta_n$'s gives an inscription of $\Delta$ in $(\gamma_1,\gamma_2,\gamma_3)$.
\end{proof}
\begin{cor}[Conjecture 6.2 in \cite{1}]
    Consider three rectifiable periodic curves $\gamma_1,\gamma_2,$ and $\gamma_3$ in $\mathbb{C}$ if
    \begin{equation}
    \label{triple area in the special case}
        \int_{\tilde{\gamma}_1}xdy + \int_{\tilde{\gamma}_2}xdy+ \int_{\tilde{\gamma}_3}xdy = 0,
    \end{equation}
    then there are three points $p_1,p_2,$ and $p_3$ in the complex plane with $p_i\in\gamma_i(\mathbb{R})$, and
    \begin{align*}
        \operatorname{Re}&(p_1)+\operatorname{Re}(p_2)+\operatorname{Re}(p_3) = 0, \\
        &\operatorname{Im}(p_1)=\operatorname{Im}(p_2)=\operatorname{Im}(p_3).
    \end{align*}
\end{cor}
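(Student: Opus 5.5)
The plan is to avoid degenerating the triangles of the preceding theorem. Their limits would be collinear triples, but the direction of the limiting line is not controlled, and we need it to be horizontal. Instead I would rerun the Floer-theoretic argument of Theorem \ref{Main Thm} with a different second Lagrangian. First reflect $\gamma_2$: put $\gamma_2^*(s)\coloneqq-\overline{\gamma_2(s)}$. This is again a periodic curve, because $-\overline{(x+iy)}=-x+iy$ preserves imaginary parts. Reflection flips the sign of $\int_{\tilde\gamma_2}xdy$, so \eqref{triple area in the special case} becomes $\int_{\tilde\gamma_1}xdy+\int_{\tilde\gamma_3}xdy-\int_{\tilde\gamma_2^*}xdy=0$. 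The desired conclusion is equivalent to finding $p_1\in\gamma_1$, $p_3\in\gamma_3$ and $q\in\gamma_2^*$ with $\operatorname{Im}p_1=\operatorname{Im}p_3=\operatorname{Im}q$ and $\operatorname{Re}p_1+\operatorname{Re}p_3=\operatorname{Re}q$; then $p_2=-\bar q$. As in the proof of Theorem \ref{main thm for continuous curves}, it suffices to treat smooth curves. The rectifiable case then follows by smooth approximation, correcting the area relation with a horizontal translation $r_n\to0$ of one curve, and extracting a convergent subsequence inside a bounded box. Note that no intersection hypothesis is needed, since no fixed-point degeneracy is involved here.

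With $t=\tfrac12$, I would work in $(X,\omega)$ and take $L_{13}=\pi(\gamma_1\times\gamma_3)$ together with
\begin{equation*}
L'\coloneqq\pi\big(\{(z,w)\in\mathbb{C}^2:\ \operatorname{Im}z=\operatorname{Im}w,\ \operatorname{Re}z+\operatorname{Re}w+i\operatorname{Im}z\in\gamma_2^*\}\big).
\end{equation*}
Writing $\gamma_2^*(s)=x(s)+iy(s)$, the set $L'$ is parametrized by $(a,s)\mapsto(a+iy(s),\,x(s)-a+iy(s))$. A direct check gives $\omega(\partial_a,\partial_s)=\tfrac12(y'-y')=0$, so $L'$ is an embedded Lagrangian cylinder invariant under $T$. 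Its intersection points with $L_{13}$ are exactly the configurations above, taken modulo translation by $\sqrt{-1}$. I would then verify the conditions of Definition \ref{Suitable pair Defintion}.
\begin{enumerate}
\item \emph{$\pi_1$-isomorphism.} This follows as in Lemma \ref{Inclusion map induces iso on fundamental group}, since the preimage of $L'$ in $\mathbb{C}^2$ is a simply connected plane.
\item \emph{Compactness of $L_{13}\cap L'$.} Intersection points have $z\in\gamma_1$ and $w\in\gamma_3$ at a common height, so both real parts are bounded by $N$.
\item \emph{Exactness.} By Lemma \ref{Condition of Action being exact}, computing $\lambda$ on the loop $s\mapsto\pi(x(s)/2+iy(s),x(s)/2+iy(s))$ gives $\tfrac12\int_{\tilde\gamma_2^*}xdy$. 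Comparing with $\lambda(\beta_2)=\tfrac12(\int_{\tilde\gamma_1}+\int_{\tilde\gamma_3})xdy$, exactness is precisely the reflected area relation.
\item \emph{Boundedness of $g$.} On $L'$ we have $g=\tfrac12 x(s)$, which is bounded.
\end{enumerate}

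Next comes the model computation. When all three curves are vertical lines $\{x=\alpha_i\}$ satisfying the area relation, $L'=\{\operatorname{Im}z=\operatorname{Im}w,\ \operatorname{Re}z+\operatorname{Re}w=\alpha_2^*\}$. Since $\alpha_1+\alpha_3=\alpha_2^*$, it meets $\{\operatorname{Re}z=\alpha_1,\operatorname{Re}w=\alpha_3\}$ in the line $\{(\alpha_1+iy,\alpha_3+iy)\}$. The tangent spaces are $\langle(i,0),(0,i)\rangle$ and $\langle(1,-1),(i,i)\rangle$, which meet exactly in $\langle(i,i)\rangle$, so the intersection is clean along a circle. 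Lemma \ref{Clean-Int-HF} then gives $HF\cong\mathbb{F}_2^2$, as in Proposition \ref{HF is the circle homology}. The general case should follow from the perturbation scheme of Lemma \ref{Bounded Hamiltonian perturbations} and Proposition \ref{Change to Suitable Pairs}. There the Hamiltonian moving $\{x=\alpha_2^*\}$ to $\gamma_2^*$ is pulled back to $X$ through $(z,w)\mapsto\operatorname{Re}z+\operatorname{Re}w+i\operatorname{Im}z$ and cut off by $h(\Vert p\Vert)$. One must then show that the cut-off does not change the intersection, using a priori bounds on $\Vert p\Vert$ like those in \eqref{int one being bounded} and \eqref{int two being bounded}. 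Invariance under compactly supported perturbations, as in Corollary \ref{Invariance under compactly supported perturbations}, then gives nonvanishing $HF$ and hence a nonempty intersection.

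I expect the main obstacle to be condition 5 of Definition \ref{Suitable pair Defintion} for the model $L'$, because for $L'$ the quantity $p=z-w=2a-x(s)$ is real. The model pair has $p(L')=\mathbb{R}$ while $p(L_{13})$ is a vertical line, so the tangent condition may fail as stated. If so, I would replace $F$ in the maximum-principle argument of Proposition \ref{boundary squared is zero} by another plurisubharmonic exhaustion adapted to both Lagrangians. The first candidate is $(\operatorname{Im}(z-w))^2$ combined with $(\operatorname{Re}z-\operatorname{Re}w)^2$, exploiting that $\operatorname{Im}(z-w)=0$ on $L'$ and $\operatorname{Re}(z-w)$ is constant on the model $L_{13}$; I would then check the Hopf-lemma step directly for this function. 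A second option is to first apply a linear symplectomorphism (a rotation in the $p$-coordinate, as with $R_\theta$) that puts the pair into the vertical-line form already handled in Lemma \ref{Vertical lines are suitable}.
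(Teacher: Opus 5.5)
Your proposal is correct in substance, but it takes a genuinely different and much longer route than the paper. The paper does no new Floer theory for this corollary. It applies the linear maps $R_1(x,y)=(x,y-x)$, $R_2(x,y)=(-x,y)$, $R_3(x,y)=(x,x+y)$. These preserve periodicity and rectifiability. The two shears leave $\int x\,dy$ unchanged, since $\int x\,dx=0$ on a closed loop of the cylinder, and the reflection flips its sign. So \eqref{triple area in the special case} says exactly that $(\{x=0\},R_1(\gamma_1),R_2(\gamma_2),R_3(\gamma_3))$ satisfies \eqref{the one-half case of area inequality}. Corollary~\ref{main cor} with $t=\tfrac12$, $\theta=\tfrac{\pi}{2}$ then gives one of two outcomes:
\begin{enumerate}
\item a common point $p$, which lies on $\{x=0\}$, so $p_i=R_i^{-1}(p)=p$ works; or
\item an inscribed square whose vertices $q_i\in R_i(\gamma_i)$ pull back to the required triple $p_i=R_i^{-1}(q_i)$.
\end{enumerate}
In effect, the paper recognizes the triple statement as a disguised square inscription. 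You instead encode the constraint directly in the new Lagrangian $L'$. Your checks are right: $L'$ is Lagrangian and $T$-invariant, it meets $L_{13}$ exactly in the desired configurations, conditions 1--4 of Definition~\ref{Suitable pair Defintion} hold, exactness is equivalent to the reflected area relation, and the model intersection is clean along a circle. This buys a self-contained argument with no case split on a common intersection point. The price is redoing the suitability, model computation and cut-off perturbation steps that the paper gets for free from Theorem~\ref{Main Thm}. Your opening concern about degenerating triangles is moot, since the paper never uses the triangle theorem here.

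The step you flag as the main obstacle is not one, so no substitute exhaustion is needed. Condition 5 is checked on each Lagrangian separately. With $t=\tfrac12$ we have $dF(v)=\tfrac14\operatorname{Re}(\overline{p}\,dp(v))$. Because $p$ is complex linear, $dF(jv)=-\tfrac14\operatorname{Im}(\overline{p}\,dp(v))$. On $L'$, for any $\gamma_2^*$ and in particular the vertical-line model, both $p=2a-x(s)$ and $dp(v)=2\alpha-\beta x'(s)$ are real. Hence $dF(jv)=0$ for every tangent vector at every point. For the vertical-line model of $L_{13}$, condition 5 is the computation in Lemma~\ref{Vertical lines are suitable}. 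After the cut-off, the perturbed Lagrangians agree with the models outside a compact set, so suitability is inherited. The mismatch you noticed, $p(L')\subset\mathbb{R}$ versus $p(K_{13})$ being a vertical line, is what you need in a different step. It yields $|\operatorname{Im}p|\le M$ and $|\operatorname{Re}p|\le M+2N$ on the perturbed intersection, which is your analogue of \eqref{int one being bounded}. Meanwhile $|p|\le 2N$ on the unperturbed intersection replaces \eqref{int two being bounded}.
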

\begin{proof}
    We define three linear maps $R_1,R_2,$ and $R_3$ on $\mathbb{C}$ by
    \begin{align*}
        & R_1(x,y) = (x, y-x),\\
        & R_2(x,y) = (-x,y),\\
        & R_3(x,y) = (x, x+y).
    \end{align*}
    For each $i$, $R_i(\gamma_i)$ is a rectifiable periodic curve, and the quadruple $(\{x=0\}, R_1(\gamma_1), R_2(\gamma_2), R_3(\gamma_3))$ satisfies the area criterion in \eqref{the one-half case of area inequality} by equation \eqref{triple area in the special case}. Therefore, we can apply Corollary \ref{main cor} to these four curves. If there is an intersection point $p$ among these curves, then $\operatorname{Re}(p)=0$, and $p_i = R_i^{-1}(p)$ are the desired points. If the intersection is empty, we know there is a square inscribed in $(\{x=0\}, R_1(\gamma_1), R_2(\gamma_2), R_3(\gamma_3))$. Let $q_1, q_2,$ and $q_3$ be the three vertices of this square that lie on $(R_1(\gamma_1),R_2(\gamma_2),R_3(\gamma_3))$. It is straightforward to check the points $p_i = R_i^{-1}(q_i)$ are the claimed points.
\end{proof}

\newpage

\bibliographystyle{amsplain}
\bibliography{ref}

\begin{flushleft}
    Boston College. Massachusetts, USA.\\
    naserisa@bc.edu 
\end{flushleft}
\end{document}